 \theoremstyle{plain}
 \newtheorem{theorem}{Theorem}
 \newtheorem*{theorem*}{Theorem}
 \newtheorem{lemma}{Lemma}
 \newtheorem{corollary}{Corollary}
\newtheorem*{fact*}{Fact}
 \newtheorem*{assumption*}{Assumption}
 \renewcommand{\tilde}{\widetilde}          
 \DeclareMathSymbol{\leqslant}{\mathalpha}{AMSa}{"36} 
 \DeclareMathSymbol{\geqslant}{\mathalpha}{AMSa}{"3E} 
 \DeclareMathSymbol{\eset}{\mathalpha}{AMSb}{"3F}     
 \renewcommand{\leq}{\;\leqslant\;}                   
 \renewcommand{\geq}{\;\geqslant\;}                   
 \newcommand{\R}{\mathbb{R}}
 \newcommand{\Z}{\mathbb{Z}}
\newcommand{\RWRE}{random walks in random environment}
\newcommand{\IP}{\mathbb{P}}
\newcommand{\E}{\mathbb{E}}
\newcommand{\8}{\infty}
 \title {Random walk delayed on percolation clusters}
\author{Francis~Comets$^1$\thanks{Partially supported
by CNRS (UMR 7599 ``Probabilit{\'e}s et Mod{\`e}les Al{\'e}atoires'')} \and
Fran\c cois Simenhaus$^{2*}$}
\begin{document}

\maketitle

{\footnotesize
\noindent $^{~1}$Universit{\'e} 
Paris Diderot -- Paris 7, UFR de Math{\'e}matiques,
Case 7012,
75205 Paris Cedex 13
\\
\noindent e-mail: \texttt{comets@math.jussieu.fr},
\noindent url: \texttt{http://www.proba.jussieu.fr/$\sim$comets}

\smallskip
\noindent $^{~2}$Universit{\'e} 
Paris Diderot -- Paris 7, UFR de Math{\'e}matiques,
Case 7012,
75205 Paris Cedex 13
\\
\noindent e-mail: \texttt{simenhaus@math.jussieu.fr}

}

 \begin{abstract} We study a continuous time random walk on the $d$-dimensional
lattice, subject to 
a drift and an attraction to large clusters of a subcritical 
Bernoulli site percolation. We find two distinct regimes:
a ballistic one, and a subballistic one
taking place when the 
   attraction is strong enough. We  identify the speed in the former case, 
and the algebraic rate of escape in the latter case. Finally, we 
discuss the  diffusive behavior in the case of
zero drift  and weak attraction.
 \end{abstract}

\noindent
{\bf Short Title:} Random walk on percolation clusters
\\[.3cm]{\bf Keywords:} Random walk in random environment, subcritical percolation, 
anomalous transport, anomalous diffusion, environment seen from the particle, coupling
\\[.3cm]{\bf AMS 2000 subject classifications:} 60K37; 60F15, 60K35, 82D30

 \section{Model and results} \label{sec:model}

Consider the graph of nearest neighbors on $\mathbb{Z}^d$,  $d \geq 1$, 
and write $x \sim y$ when $\|x-y\|_1=1$. Here, $\|\cdot\|_1$ is the
$\ell_1$-norm, though  $|\cdot |$ denotes the Euclidean norm.

An environment is an element $\omega$ of $\Omega=\{0,1\}^{\mathbb{Z}^d}$. 
Environments are used to construct the independent identically distributed 
(i.i.d.) Bernoulli site percolation on the lattice.
We consider the product $\sigma$-field on $\Omega$ and for $p\in(0,1)$, the probability $\mathbb{P}=\mathcal{B}(p)^{\otimes\mathbb{Z}^d}$, where $\mathcal{B}(p)$ denotes the Bernoulli law with parameter $p$. A site $x$ in $\mathbb{Z}^d$ is said open if $\omega_x=1$, and closed otherwise. 
Consider the open connected  components (so-called clusters) in the percolation
graph. The cluster of an open site
$x \in \mathbb{Z}^d$ is the union of $\{x\}$ with the 
set of all $y \in \mathbb{Z}^d$ which are connected to $x$ by a path
with all vertices open.  The cluster of a closed site is empty.
We denote by $C_x$  the cardinality of the cluster  of $x$.

It is well known that there exists a critical $p_c=p_c(d)$ such that for $p<p_c$, $\mathbb{P}$-almost surely, all connected open components (clusters) of $\omega$ are finite, though for $p > p_c$, there a.s. exists an infinite cluster. 
Moreover, it follows from \cite{Aizenman-Barsky87}, \cite{Menshikov86}
that, in the first case, the  clusters size has an exponential tail:
 For any $p<p_c$, there exists $\xi=\xi(p)>0$ such that for all $x$,
\begin{equation*}
 \lim_{n\to \infty}\frac{1}{n}\ln\mathbb{P}(C_x\geq n)=-\xi\;.
\end{equation*}
In this paper, we fix $p < p_c$.
Let $\ell=(\ell_k; 1 \leq k \leq d)$ be a unit vector, $\lambda$ and $\beta$
two non-negative number.
For every environment $\omega$, let  $P_{\omega}$ be the law of the continuous
time Markov chain $Y=(Y_t)_{t\geq 0}$ on ${\mathbb{Z}^d}$ starting at $0$ with generator $L$ given for continuous bounded functions $f$ by
\begin{equation*}
 Lf(x)= K
\sum_{e \sim 0}
e^{\lambda \ell \cdot e -\beta C_x}
\Big[ f(x+e) -f(x) \Big]\;,
\end{equation*}
where we chose the normalizing constant $K$  as 
$K=\big(\sum_{e \sim 0}e^{\lambda \ell \cdot e} \big)^{-1}$ for simplicity.
Given $\omega$, define the  measure  $\mu$ on $\mathbb{Z}^d$ by
\begin{equation} \label{eq:mu}
\mu(x)=e^{2\lambda \ell \cdot x +\beta C_x}\;.
\end{equation}
The random measure $\mu$ combines a shift in 
the direction $\ell$ together
with an attraction to large clusters.
Observe that the process $Y$ admits $\mu$ as  invariant, reversible measure.
The markovian time evolutions of $\mu$ are of natural 
interest in the context of \RWRE. They describe random walks which 
have a tendency to live on 
large clusters, the attraction becoming stronger as $\beta$ is 
increased. The  isotropic case, $\lambda=0$, has been
considered in \cite{Popov-Vachkovskaia05} with a different, discrete-time 
dynamics. There, the authors proved that the walk is diffusive for small 
$\beta$, and 
subdiffusive for large $\beta$. The investigation of slowdowns in the 
anisotropic case is then natural. 
In \cite{Shen02}, a random resistor network is considered with a
invariant reversible  measure of the form 
$C(x,\omega) e^{2 \lambda \ell \cdot x}$
where the random field $(C(x,\omega);x \in \Z^d)$ is stationary  ergodic and 
bounded away from 0 and $+ \8$: in this case, the \RWRE \ is ballistic
for all positive $\lambda$.

The study of a general dynamics
in the presence of a drift contains many difficult questions, and the 
advantage of the  particular
process $Y$ considered here is that we can push the analysis  farther.
We could as well handle the discrete time analogous of $Y$, i.e. the \RWRE\
with geometric holding times instead of exponential ones, which falls in the 
class of marginally nestling walks in the standard classification 
(e.g., \cite{ZeitouniSF}).
The Markov process
$Y$ can also be described with it skeleton and its jump rates.
The  skeleton $X=(X_n)_{n\in \mathbb{N}}$ is defined as 
the sequence of distinct consecutive locations
visited by $Y$. Then, $X$ is a discrete time Markov chain with transition 
probabilities $\tilde{P}$, given for  $x\in\mathbb{Z}^d$ and $e \sim 0$ by
\begin{equation*}
 \forall x\in\mathbb{Z}^d,\ \forall e \sim 0,\quad 
\tilde{P}(X_{n+1}=x+e|X_n=x)=
\frac{e^{\lambda \ell\cdot e}}{\sum_{e' \sim 0}e^{\lambda \ell \cdot e'}}=: 
{\tilde p}_e\;,
\end{equation*}
and $\tilde{P}(X_{n+1}=y|X_n=x)=0$ if $y$ is not a nearest neighbor of $x$.
This Markov chain is simple, since $X$ is the random walk on $\mathbb{Z}^d$ with drift 
\begin{equation} \label{eq:drift}
d(\lambda)
= \frac{1}{ \sum_{k=1}^d \cosh (\lambda 
\ell_k)}\; \Big( \sinh(\lambda \ell_k)\Big)_{1\leq k \leq d}.
\end{equation}
It is plain that for the random walk, 
\begin{equation} \label{eq:LLNX}
\frac{X_n}{n} \longrightarrow d(\lambda)\qquad \tilde{P}-a.s., 
\end{equation}
so directional transience is clear and
the law of large number for $Y$ boils down to
the study of the clock process which takes care of the real time for jumps.
As can be seen from formula (\ref{eq:S_n}),
the process considered here is a generalization of the so-called random
walk in a random scenery, or the random walk subordinated to a renewal process,
which are used as effective models for anomalous diffusions. The difference is
essentially that the
environment field (i.e., the mean holding times)
has here some short-range correlations due to the percolation. It is
also related to the trap model considered in
the analysis of the aging phenomenon introduced in \cite{Bouchaud92}:
the aging of this model has been studied in details, see
\cite{Benarous-Cerny05} for a recent review.

For a fixed $\omega$, $P_{\omega}$ is called the quenched law and we define the annealed law $P$ by
\begin{equation*}
 P=\mathbb{P} \times P_{\omega}.
\end{equation*}
Of course, statements which hold $P$-a.s., equivalently hold
$ P_{\omega}$-a.s. for $\IP$-a.e. environment.

Finally, we stress that we assume $d \geq 1$ in this paper.
The case $d=1$ is special since the critical  threshold  $p_c(1)=1$.
Moreover, specific techniques are available in one dimension, 
e.g. \cite{ZeitouniSF} for a survey, however we will stick as much as possible 
to techniques applying for all $d$.

Our first result is the law of large numbers.
\begin{theorem}
\label{theorem:lln} {\bf (Law of large numbers)}
 For any $\lambda \geq 0$ and any $\beta \geq 0$,
\begin{equation*}
 \frac{Y_t}{t}\xrightarrow[t\to +\infty]{} v(\lambda,\beta),\quad P-a.s.,
\end{equation*}
where 
\begin{equation} \label{eq:vitesse}
v(\lambda,\beta)=\Big(\mathbb{E} e^{\beta C_0}\Big)^{-1} d(\lambda)\;.
\end{equation}
In particular, $v(\lambda,\beta)=0$ if $\beta>\xi$ or $\lambda=0$ 
though $v(\lambda,\beta)\cdot \ell >0$ if $\beta<\xi$ and $\lambda\neq 0$.
\end{theorem}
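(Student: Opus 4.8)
The plan is to split $Y$ into its skeleton $X$ and its clock, and to control the clock with an ergodic theorem for the environment seen from the particle. First I would write $Y_t=X_{N_t}$ with $N_t=\max\{n\ge0:S_n\le t\}$ the number of jumps before time $t$, and $S_n=\sum_{k=0}^{n-1}\tau_{k+1}$ the time of the $n$-th jump, $\tau_{k+1}$ being the holding time of $Y$ at $X_k$. Since the choice of $K$ makes the total jump rate out of $x$ equal to $e^{-\beta C_x}$, one has $\tau_{k+1}=e^{\beta C_{X_k}}\mathbf e_{k+1}$ with $(\mathbf e_k)_{k\ge1}$ i.i.d.\ standard exponential and independent of $(\omega,X)$ (recall that $\tilde P$ does not depend on $\omega$), so $S_n=\sum_{k=0}^{n-1}e^{\beta C_{X_k}}\mathbf e_{k+1}$, which is~(\ref{eq:S_n}). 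As $\tau_{k+1}\ge\mathbf e_{k+1}$ and $\sum_k\mathbf e_{k+1}=+\infty$ a.s., there is no explosion and $N_t\to+\infty$ $P$-a.s.; then, using $Y_t/t=(X_{N_t}/N_t)(N_t/t)$ together with $X_n/n\to d(\lambda)$ from~(\ref{eq:LLNX}) and the standard inversion between a renewal-type counting function and its partial sums, the theorem reduces to
\[
\frac{S_n}{n}\xrightarrow[n\to\infty]{}\ \mathbb E\,e^{\beta C_0}\qquad P\text{-a.s.},
\]
an identity in $(0,+\infty]$ (note $\mathbb E\,e^{\beta C_0}\ge1$, so the resulting limit $(\mathbb E\,e^{\beta C_0})^{-1}d(\lambda)$ of $Y_t/t$ is indeed $v(\lambda,\beta)$, with the convention $1/\infty=0$).

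To obtain this I would use the environment seen from the particle. Writing $\theta_x$ for the shift on $\Omega$, the sequence $\bar\omega_k:=\theta_{X_k}\omega$ is a Markov chain, with kernel $\omega\mapsto\sum_{e\sim0}\tilde p_e\,\delta_{\theta_e\omega}$ not depending on $\omega$ (since $\tilde P$ doesn't), and $\mathbb P$ is invariant for it by shift invariance. Moreover $\mathbb P$ is \emph{ergodic} for $\bar\omega$: if $A\subseteq\Omega$ satisfies $\sum_{e\sim0}\tilde p_e\mathbf 1_A(\theta_e\omega)=\mathbf 1_A(\omega)$ $\mathbb P$-a.s., then this convex combination of $\{0,1\}$-values with strictly positive weights on all $2d$ neighbours being again $\{0,1\}$-valued forces $\mathbf 1_A=\mathbf 1_A\circ\theta_e$ $\mathbb P$-a.s.\ for every $e\sim0$; as nearest-neighbour steps generate $\mathbb Z^d$, $A$ is shift invariant mod $\mathbb P$, hence $\mathbb P(A)\in\{0,1\}$ because $\mathbb P$ is ergodic under the $\mathbb Z^d$-shift. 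Next I would enlarge the chain to $\Xi_k:=(\bar\omega_k,\tau_{k+1})$ on $\Omega\times\mathbb R_+$: conditionally on $\bar\omega_k$ the mark $\tau_{k+1}$ is an independent exponential of mean $e^{\beta C_0(\bar\omega_k)}$, so $(\Xi_k)$ is again a Markov chain, started from and leaving invariant $\nu(d\omega,dt)=\mathbb P(d\omega)\otimes\mathrm{Exp}\big(e^{-\beta C_0(\omega)}\big)(dt)$, and it is ergodic because the conditional law of the mark given the environment is fixed by the kernel and by stationarity, so an invariant probability is determined by its first marginal, which is the ergodic chain $\bar\omega$. Birkhoff's theorem applied to $F(\omega,t)=t$ — in $L^1(\nu)$ exactly when $\mathbb E\,e^{\beta C_0}<\infty$, with $\int t\,d\nu=\mathbb E\,e^{\beta C_0}$ — then gives $S_n/n\to\mathbb E\,e^{\beta C_0}$ in that case, while if $\mathbb E\,e^{\beta C_0}=+\infty$ one applies it to $t\wedge M$ and lets $M\uparrow\infty$ to get $S_n/n\to+\infty$. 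This proves the law of large numbers, and since no assumption on $\lambda$, $\beta$ or $d$ has entered, the one-dimensional and the recurrent ($\lambda=0$) cases are covered uniformly.

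The "in particular" clause is then read off from~(\ref{eq:vitesse}): by~(\ref{eq:drift}), $d(\lambda)=0$ iff $\sinh(\lambda\ell_k)=0$ for all $k$ iff $\lambda=0$ (as $\ell$ is a unit vector), whereas for $\lambda\neq0$ one has $d(\lambda)\cdot\ell=\big(\sum_k\ell_k\sinh(\lambda\ell_k)\big)\big/\big(\sum_k\cosh(\lambda\ell_k)\big)>0$; and the exponential tail $\frac1n\ln\mathbb P(C_0\ge n)\to-\xi$ gives $\mathbb E\,e^{\beta C_0}<\infty$ when $\beta<\xi$ and $\mathbb E\,e^{\beta C_0}=+\infty$ when $\beta>\xi$, by comparing $e^{\beta n}$ with $\mathbb P(C_0\ge n)$. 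I expect the only genuinely delicate point to be the ergodicity of the environment seen from the particle, together with pinning down the enlarged chain $\Xi$ as the right object whose Birkhoff averages are literally $S_n$; the absence of explosion, the counting/partial-sum inversion, and the truncation argument when $\mathbb E\,e^{\beta C_0}=\infty$ are routine.
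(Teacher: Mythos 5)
Your proposal is correct and follows essentially the same route as the paper: the skeleton/clock decomposition with $S_n$ as in (\ref{eq:S_n}), ergodicity of the environment seen from the particle, Birkhoff's theorem giving $S_n/n\to\mathbb{E}\,e^{\beta C_0}$ (including the infinite case), and inversion of the clock to conclude. The only minor differences are presentational: you certify ergodicity via the standard extremality criterion applied to indicator harmonic functions and work with the pair chain $(\bar\omega_k,\tau_{k+1})$, whereas the paper proves ergodicity by a self-contained martingale argument (Lemma \ref{lemma:ergodique}), uses the standardized exponentials $(\tilde\omega_i,\mathcal{E}_i)$, and handles the clock inversion through the explicit sandwich (\ref{equation:encadrement}).
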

As in the case $\lambda=0$ considered in \cite{Popov-Vachkovskaia05},
slowdowns occur for large disorder intensity $\beta$, when the walk gets
trapped on large percolation clusters. This behavior is
reminiscent of the biased random walk on the supercritical percolation
infinite cluster  \cite{Sznitman03},  \cite{Berger-Gantert-Peres03} where ballistic or subballistic regimes take place according to the parameters values. 
The slowdowns in our paper have a similar nature to those in some 
one dimensional
\RWRE, see \cite{Solomon75}, \cite{Kesten-Kozlov-Spitzer75} and \cite{Sinai82}.
Moreover, as in the one dimensional case, 
we obtain here explicit values for the rate of escape, a rather unusual fact
in larger dimension.
More drastic (logarithmic) slowdowns were also found for an unbiased walker 
in a moon craters
landscape in \cite{Durrett86}, \cite{Bramson-Durrett88}, or diffusions in 
random potentials \cite{Mathieu94}, but in these models the behavior at 
small disorder  is qualitatively different from the behavior without disorder.

The next result contains extra information on the subballistic behavior. 

\begin{theorem}
\label{theorem:vitesse}  {\bf (Subballistic regime)}
Let $\beta \geq \xi$.
\begin{enumerate}
	 \item\label{item:partie1} For any $d\geq 1$ and $\lambda>0$,
\begin{equation*}
\frac{\ln|Y_t|}{\ln t}\xrightarrow[t\to +\infty]{}\frac{\xi}{\beta} \qquad 
P-a.s.
	\end{equation*}
	\item\label{item:partie2} If $\lambda=0$, for any $d\geq 2$ we have
\begin{equation*}
 \limsup_{t\to +\infty}\frac{\ln|Y_t|}{\ln t} = \frac{\xi}{2\beta} \qquad 
P-a.s.
\end{equation*}
	\item\label{item:partie3} If $d=1$ and $\lambda=0$ we have
\begin{equation*}
 \limsup_{t\to +\infty}\frac{\ln|Y_t|}{\ln t} = \frac{1}{2}(\frac{\beta}{2\xi}+\frac{1}{2})^{-1} \qquad 
P-a.s.
\end{equation*}

\end{enumerate}
\end{theorem}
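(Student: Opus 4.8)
The plan is to analyze the clock process for the one-dimensional walk conditioned on the environment, much as in part (2), but now exploiting the special one-dimensional cluster structure. Recall that in $d=1$ the skeleton $X$ with $\lambda=0$ is the simple symmetric random walk, and the time change is governed by the mean holding times $e^{\beta C_{X_n}}$ at the successive sites visited. Writing $S_n = \sum_{k=0}^{n-1} e^{\beta C_{X_k}} E_k$ for i.i.d.\ standard exponential $E_k$ (the clock process), we have $Y_t = X_{n}$ for $n$ determined by $S_n \leq t < S_{n+1}$, so $\ln|Y_t|/\ln t$ is controlled by comparing the range $\max_{k\leq n}|X_k| \sim n^{1/2}$ of the SSRW against the growth of $S_n$. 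Thus I must identify the exponent $a$ with $S_n \approx n^{a}$ along a subsequence, and then $\limsup \ln|Y_t|/\ln t = \tfrac{1}{2a}$ heuristically; the value $\tfrac12(\tfrac{\beta}{2\xi}+\tfrac12)^{-1}$ corresponds to $a = \tfrac{\beta}{2\xi}+\tfrac12$.

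The first key step is to pin down the tail of a single holding-time weight $w_x := e^{\beta C_x}$. Since in $d=1$ a cluster through $x$ is an interval of open sites, $C_x \geq n$ has probability of order $n\,p^{n}$ (there are $\sim n$ intervals of length $n$ containing $x$), so $\IP(C_x \geq n) = e^{-\xi n + o(n)}$ with $\xi = -\ln p$, and hence $\IP(w_x \geq s) = s^{-\xi/\beta + o(1)}$: the weights are in the domain of attraction of a stable law of index $\alpha := \xi/\beta \in (0,1]$ when $\beta \geq \xi$. The second step is to understand how the SSRW samples these heavy-tailed weights. Over $n$ steps the walk occupies $\sim n^{1/2}$ distinct sites, each with local time of order $n^{1/2}$ (by the standard local-time scaling for SSRW); so the clock increment is comparable to $L \cdot W$, where $L \sim n^{1/2}$ is a typical local time and $W$ is the maximum of $\sim n^{1/2}$ i.i.d.\ weights. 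The max of $m$ weights with tail exponent $\alpha$ is of order $m^{1/\alpha}$, so $W \sim n^{1/(2\alpha)}$ and $S_n \sim n^{1/2}\cdot n^{1/(2\alpha)} = n^{1/2 + 1/(2\alpha)} = n^{1/2 + \beta/(2\xi)}$. This gives $a = \tfrac12 + \tfrac{\beta}{2\xi}$ and the claimed exponent $\tfrac12/a$.

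To make this rigorous I would prove matching upper and lower bounds for the $\limsup$. For the \emph{upper bound} on $|Y_t|$: use that with overwhelming probability no cluster of size exceeding $(\tfrac1\xi + \varepsilon)\ln R$ meets the box $[-R,R]$, so before time $t$ the walk, having range at most its maximal displacement, cannot have accumulated clock time much less than (number of visits)$\times$(smallest reachable large weight); a Borel--Cantelli argument along $t = 2^j$ then caps $\limsup \ln|Y_t|/\ln t$ by $\tfrac12/a$. For the \emph{lower bound} one must produce, infinitely often, an environment window in which the walk travels distance $R$ while the clock stays below $R^{2a}$: this happens when the walk fortuitously avoids every cluster of size $\gtrsim \frac{2a}{\xi}\ln R = (1 + \frac{\beta}{\xi})\ln R$ in a stretch of length $R$, an event of probability that is only polynomially small in $R$ (each of the $R$ sites independently has a not-too-large cluster with probability $1 - R^{-(1+\beta/\xi)+o(1)}$, and on such a favorable stretch the SSRW needs $\sim R^2$ steps, each contributing a weight $\lesssim R^{1+\beta/\xi}$, for a total $\lesssim R^{2 + \beta/\xi}\cdot(\text{log factors}) = R^{2a+o(1)}$). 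Summing these polynomially-small-but-not-summable probabilities over a geometric sequence $R_j$, together with a second-moment or independence argument to get a genuine Borel--Cantelli lower bound, yields the matching lower bound for the $\limsup$.

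The main obstacle I anticipate is the lower bound: one needs the walk to both spend a long time in a favorable region (so that $t$ is large) \emph{and} be displaced by the full distance $R_j$ at a moment when the clock has not yet been boosted by an atypically large cluster. Controlling the joint law of the SSRW path and the quenched weights it samples — in particular ruling out that the walk's local time concentrates on one slightly-above-average-but-still-substantial weight and thereby inflates the clock prematurely — is delicate, and is exactly the point where the precise combinatorics of one-dimensional intervals (giving the clean exponent $\tfrac{\beta}{2\xi}+\tfrac12$ rather than the $d\geq2$ value $\tfrac{\beta}{2\xi}$) enters: in $d=1$ the extra $+\tfrac12$ reflects that the walk is \emph{forced} to pile up order-$n^{1/2}$ local time on its most-visited site, so the relevant quantity is (local time)$\times$(max weight) rather than just (max weight), as in higher dimensions where the range is of the same order as $n$ and local times are $O(1)$.
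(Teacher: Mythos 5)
Your heuristic is the right one (clock exponent $a=\frac{\beta}{2\xi}+\frac12$ from ``local time $\times$ maximal weight in the range'', combined with the LIL for the skeleton), and the two-bound structure matches the paper's Lemmas \ref{lemma:bornesup} and \ref{lemma:borneinf}. But your concrete plan for the key direction, the clock upper bound $S_n\leq n^{a+o(1)}$ (which is what gives $\limsup\ln|Y_t|/\ln t\geq \frac1{2a}$), would fail. You ask for stretches of length $R$ containing no cluster of size $\gtrsim\frac{2a}{\xi}\ln R$ and then bound the clock by (number of steps)$\times$(largest admissible weight). With your cutoff, a cluster of size $\frac{2a}{\xi}\ln R$ carries weight $e^{\beta\cdot\frac{2a}{\xi}\ln R}=R^{2a\beta/\xi}$, not $R^{1+\beta/\xi}$ (you have conflated the size cutoff with the weight it produces), so the product bound gives $R^{2+2a\beta/\xi}\gg R^{2a}$ since $\beta/\xi>1$. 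To make the crude product bound close you would need the cutoff $(\frac1\xi-\frac1\beta)\ln R$, which is \emph{below} the typical maximum over a window of length $R$; such a window is free of these clusters with probability about $\exp(-R^{\xi/\beta})$, so favorable windows sit at stretched-exponential distance from the origin and cannot be reached within time $R^{2a}$ — there is nothing for a second-moment Borel--Cantelli to work with (and your ``polynomially small but not summable'' bookkeeping is consistent with neither cutoff). The paper needs no avoidance event at all: it proves the bound for typical environments by a fractional-moment argument, $S_n^\alpha\leq\sum_y e^{\alpha\beta C_y}\big(\sum_k\mathcal{E}_{y,k}\big)^\alpha$ with $\alpha<\xi/\beta$, then uses the range bound $n^{1/2+\eta}$, the uniform local-time bound $n^{1/2+\nu}$, large-deviation concentration for the exponentials re-indexed by (site, visit number), and the spatial ergodic theorem for $e^{\alpha\beta C_y}$ (integrable precisely because $\alpha\beta<\xi$), letting $\alpha\uparrow\xi/\beta$ at the end. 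That fractional-moment step is the missing idea, and it is also what disposes of the ``prematurely inflated clock'' worry you raise.

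Two further gaps. For the opposite direction (lower bound on $S_n$, hence upper bound on $|Y_t|$), your $L\cdot W$ heuristic must be substantiated by showing that among the $\sim n^{1/2}$ sites with local time $\geq n^{1/2-\eta}$ at least one actually lies in a cluster of size about $\frac1{2\xi}\ln n$; the paper does this by placing disjoint intervals of length $\frac12(\frac1\xi-\nu)\ln n$ around heavily visited sites and applying Borel--Cantelli — this is exactly the joint path/environment control you defer, and it belongs to this direction, not the one you flag. Note also that you need the full limit $\ln S_n/\ln n\to a$ (not merely a subsequential statement) to multiply against $\limsup\ln|X_n|/\ln n=\frac12$ and get the stated $\limsup$ for $Y$. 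Finally, the theorem asserts $\beta\geq\xi$ and has three parts: your sketch addresses only part 3 with $\beta>\xi$; the border case $\beta=\xi$ is obtained in the paper from the monotone coupling (\ref{eq:coupling}) together with (\ref{eq:derniercas}) of Theorem \ref{theorem:diff}, and parts 1--2 rest on Lemmas \ref{lemma:range}--\ref{lemma:liminf}, none of which your proposal covers.
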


Hence, the spread of the \RWRE \ scales algebraically with time in all cases.
Note that in the isotropic case $\lambda=0$, the slowdown is larger for
$d=1$ than for $d \geq 2$. This will appear in the proof as a consequence of 
the strong recurrence of the simple random walk $X$ in the
one-dimensional case.
Note that our results are only in the logarithmic scale, though
the scaling limit has been  obtained for the isotropic trap model,
in dimension $d=1$ (e.g.,  \cite{Benarous-Cerny05}), and $d\geq 1$
 \cite{Benarous-Cerny06} with limit given, if the disorder is strong,
by the time change of a Brownian motion by the inverse of a stable subordinator
(fractional kinetics). Though we believe that the  scaling limit of
our model without drift ($\lambda=0$) is the same, we could not get finer
results because of the presence of correlations
in the medium. Moreover, the case of a drift $\lambda \neq 0$ has not
been considered in the literature, except for $d=1$ with renormalization group
arguments \cite{Monthus04}.

To complete the picture, we end by the diffusive case.
(Recall that $\beta < \xi$ is sufficient for $\E(e^{\beta C_0})<\8$.)

\begin{theorem}  
\label{theorem:diff} 
{\bf (Diffusive case regime)}
Assume $\lambda=0$, and $\E(e^{\beta C_0})<\8$. 
Then, we have a quenched invariance principle for the rescaled process
 $Z^{\epsilon}=(Z^{\epsilon}_t)_{t \geq 0}$, 
$Z^{\epsilon}_t={\epsilon}^{1/2} Y_{{\epsilon}^{-1}t}$:
For almost every $\omega$, as $\epsilon \searrow 0$,
the family of processes $Z^{\epsilon}$
converges in law under $P_\omega$
to the $d$-dimensional 
Brownian motion with diffusion matrix $\Sigma=\big(d \times 
\E(e^{\beta C_0})\big)^{-1}
I_d$. Moreover, 
\begin{equation} \label{eq:derniercas}
 \limsup_{t\to +\infty}\frac{\ln|Y_t|}{\ln t} = \frac{1}{2} \qquad 
a.s.
\end{equation}
\end{theorem}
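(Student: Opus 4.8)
The plan is to derive the invariance principle from the standard machinery for reversible random walks in random environment, and then read off the displayed limsup statement from the Brownian scaling together with a Borel--Cantelli argument. First I would set up the clock-process representation: since $\lambda=0$, the skeleton $X$ is simple random walk on $\mathbb Z^d$ (drift $d(0)=0$), and the real time elapsed after $n$ jumps is
\begin{equation*}
  S_n=\sum_{k=0}^{n-1}\tau_k,
\end{equation*}
where, conditionally on $X$ and $\omega$, the holding times $\tau_k$ are independent exponentials with mean proportional to $e^{\beta C_{X_k}}$ (the jump rate at $x$ being $K\sum_{e\sim 0}e^{-\beta C_x}$, which for $\lambda=0$ is $e^{-\beta C_x}$ up to the constant $K$). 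The natural route is the ``environment seen from the particle'': the measure $\mu(x)=e^{\beta C_x}$ of \eqref{eq:mu} is reversible for $Y$, and its shift-invariant version on $\Omega$ has density proportional to $e^{\beta C_0}$ with respect to $\mathbb P$. Because $\E(e^{\beta C_0})<\infty$ by hypothesis, this gives a bona fide invariant probability measure $\mathbb Q$ for the process of environments viewed from $Y_t$, absolutely continuous with respect to $\mathbb P$ and with bounded-below, integrable density.

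Given that, the quenched CLT follows from the Kipnis--Varadhan / De Masi--Ferrari--Goldstein--Wick theory for symmetric Markov processes with an ergodic invariant measure: one checks that the corrector (the solution of the associated Poisson equation, or rather its $L^2$ limit) exists because the Dirichlet form is comparable to that of simple random walk — the conductances $e^{-\beta C_x\wedge C_y}$-type weights are bounded above by $1$, and the speed measure $e^{\beta C_x}$ is $\mathbb Q$-integrable — so $Y_t$ decomposes as a square-integrable martingale plus a sublinear corrector term. The martingale part satisfies a CLT with deterministic covariance by ergodicity of $\mathbb Q$ under lattice shifts (which follows from ergodicity of $\mathbb P$ and equivalence of the two measures), and isotropy forces the covariance to be a multiple of $I_d$. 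The constant is pinned down by computing the asymptotic mean square displacement: after $n$ skeleton steps $|X_n|^2\sim n$, and the time to perform $n$ steps is $S_n\sim n\,\E_{\mathbb Q}(\text{mean holding time})=n\,\E(e^{\beta C_0})\big/\E(e^{\beta C_0})\cdot(\text{something})$; carrying the constants through the normalization $K$ and the $1/d$ from the coordinatewise variance of simple random walk yields $\Sigma=\big(d\,\E(e^{\beta C_0})\big)^{-1}I_d$. I would then upgrade the finite-dimensional convergence to process-level convergence in the Skorokhod space via tightness, which is routine once the martingale decomposition and the sublinearity of the corrector are in hand.

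For the second assertion \eqref{eq:derniercas}, the invariance principle gives $|Y_t|\approx t^{1/2}$ in distribution, so the upper bound $\limsup \ln|Y_t|/\ln t\le 1/2$ requires only a quenched tail estimate: $P_\omega(|Y_t|\ge t^{1/2+\delta})$ must be summable along $t=2^n$ (say), which follows from Gaussian-type concentration for $|Y_t|$ — obtainable from the martingale structure and Azuma/exponential martingale bounds on $X$ combined with control of $S_n$ — together with a standard maximal-inequality passage from the discrete times $2^n$ to all $t$. The matching lower bound $\limsup\ge 1/2$ is immediate from the CLT: along $t=2^n$ the events $\{|Y_t|\ge c\,t^{1/2}\}$ have probability bounded below, and a second-moment / blocking argument (using near-independence of increments of $Y$ over well-separated time scales, again via the martingale approximation) makes them occur infinitely often $P_\omega$-a.s. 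The main obstacle, and the place where the percolation correlations genuinely enter, is verifying the sublinearity of the corrector and the $L^2$ bounds on it: unlike the i.i.d. conductance model, here the speed measure $e^{\beta C_x}$ has the short-range dependence coming from the clusters, so I would need the exponential tail $\mathbb P(C_0\ge n)\le e^{-\xi n}$ (hence all moments of $e^{\beta C_0}$ when $\beta<\xi$, and merely the first moment in the borderline hypothesis $\E(e^{\beta C_0})<\infty$) and the fact that correlations decay exponentially in distance to run the ergodic-theorem and variance estimates; keeping the argument at the level of the first moment of $e^{\beta C_0}$, rather than assuming $\beta<\xi$ outright, is the delicate point.
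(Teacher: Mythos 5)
Your strategy misses the structural fact that makes this theorem easy, and as a result the step you yourself single out as the ``main obstacle'' is left open. When $\lambda=0$ the generator is isotropic at every site: the rate of jumping from $x$ to \emph{each} of its $2d$ neighbours is the same, $(2d)^{-1}e^{-\beta C_x}$, so the randomness of the environment enters only through the speed measure (the holding times), not through the jump distribution. In particular the model is \emph{not} a random conductance model with cluster-dependent conductances as you describe (the conductances are constant, since $\mu(x)\cdot(2d)^{-1}e^{-\beta C_x}\equiv(2d)^{-1}$); the walk $Y$ is exactly a time change of simple random walk and is already a square-integrable $P_\omega$-martingale for every $\omega$. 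Consequently the Kipnis--Varadhan corrector you propose to construct is identically zero, and the ``delicate point'' of proving its sublinearity under the mere hypothesis $\E(e^{\beta C_0})<\8$ --- which your proposal flags but does not resolve, and which is genuinely hard in general quenched settings --- simply does not arise. The paper's proof is short: compute $\langle Y\rangle_t=d^{-1}\bigl(\int_0^t e^{-\beta C_{Y_s}}ds\bigr)I_d$, use Corollary \ref{corollary:empirique} (ergodic theorem for the environment seen from the walker, needing only $\E(e^{\beta C_0})<\8$) to get $\langle Z^\epsilon\rangle_t\to t\Sigma$ a.s., and invoke the martingale functional CLT (Theorem VIII-3.11 of \cite{Jacod-Shiryaev87}), since the jumps of $Z^\epsilon$ are of size $\epsilon^{1/2}\to 0$. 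So as written your argument has a gap (the corrector analysis is asserted, not done), even though the gap could be closed trivially by observing it is vacuous.

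For \eqref{eq:derniercas} your route (Gaussian concentration plus Borel--Cantelli along dyadic times for the upper bound, and a second-moment/blocking argument for the lower bound) is heavier than necessary and the ``near-independence of increments'' needed for the lower bound is itself nontrivial and not justified in the quenched setting. The paper instead writes $\frac{\ln|Y_t|}{\ln t}=\frac{\ln|X_{S^{-1}(t)}|}{\ln S^{-1}(t)}\cdot\frac{\ln S^{-1}(t)}{\ln t}$, notes that $\limsup_n \ln|X_n|/\ln n=1/2$ $\tilde P$-a.s.\ by the law of the iterated logarithm for the simple random walk skeleton, and that $\ln S^{-1}(t)/\ln t\to 1$ a.s.\ because $S_n/n\to\E(e^{\beta C_0})<\8$ (Lemma \ref{lemma:ergodique} and the ergodic theorem); since $S^{-1}$ is a counting process running through all integers, the two facts compose directly. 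Your constant identification also conflates the time-stationary law $\mathbb{P}^0$ with the law $\mathbb{P}$ that is stationary for the skeleton chain; the correct statement is $S_n/n\to\E(e^{\beta C_0})$ under $\mathbb{P}$, which is what pins down $\Sigma=\bigl(d\,\E(e^{\beta C_0})\bigr)^{-1}I_d$.
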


For the proof of our results 
we will take the point of view of the environment seen from the
walker. It turns out that the ``static'' environmental distribution is 
invariant for the dynamics. Hence the environment is always at equilibrium.

The paper is organized as follows.
In the next section, we introduce the basic ingredients for our analysis
and we prove the law of large numbers of Theorem \ref{theorem:lln}. 
The last section is devoted to the subballistic regime and 
contains the proofs
 of Theorem \ref{theorem:vitesse}
and \ref{theorem:diff}.


 \section{Preliminaries and the proof of Theorem \ref{theorem:lln}} 
\label{sec:ensuite}

For $x \in \mathbb{Z}^d$, $T^x$ will denote the space shift with vector $x$.
We will consider also the  time shift $\theta$.
\medskip

\noindent
{\bf Skeleton and clock process of $Y$.} The sequence $(S_n; n \geq 0)$
of jump times of the Markov process $Y$ with right-continuous paths
is defined by $S_0=0<S_1<S_2<\ldots$, 
$Y_t=Y_{S_n}$ for $t \in [S_n,S_{n+1})$, $Y_{S_{n+1}}\neq Y_{S_n}$. 
The skeleton of $Y$ is
the sequence $X$ given by $X_n=Y_{S_n}, n\geq 0$.
As mentioned above, the skeleton $X$
of $Y$ is the simple random walk with drift.
For any $x$ in $\mathbb{Z}^d$, the jump rate of $(Y_t)_{t\geq 0}$ at $x$ 
is $e^{-\beta C_x}$. Hence the time $S_n$ of the $n$-th jump is
the sum of $n$ independent random variables with exponential distribution with
mean $e^{\beta C_{X_i}}, i=1,\ldots n$. This means that the sequence 
 $\mathcal{E}=
(\mathcal{E}_i)_{i \in \mathbb{N}}$, with $ \mathcal{E}_i=e^{-\beta C_{X_i}}
(S_{i+1}-S_{i})$,
is  a sequence of i.i.d. exponential variables with mean $1$, with 
$\mathcal{E}$ and $X$ independent.
 The law of this sequence will be denoted by $Q$ ($Q=\mathcal{E}xp(1)^{\otimes \mathbb{N}}$, with $\mathcal{E}xp(1)$ the mean $1$, exponential law).
For any $n$ in $\mathbb{N}$, the time $S_n$ of the $n$-th jump is given by
\begin{equation} \label{eq:S_n}
 S_n=\sum_{i=0}^{n-1} \mathcal{E}_i e^{\beta C_{X_i}}.
\end{equation}
This sequence can be view as a step function $S_t:=S_{[t]}$,
 where $[\cdot]$ is the integer part, and we also define
its generalized inverse $S^{-1}$:
for any $t\geq 0$, 
\begin{equation*}
S^{-1}(t)=n \iff  S_n\leq t <S_{n+1}\;.
\end{equation*}
We observe that $S_n \to \8$ as $n \to \8$ $P_{\omega}$-a.s. for all $\omega$, 
making the function $S^{-1}$ defined on the whole of $\R_+$. 
Then, $P_{\omega}$-a.s.,
\begin{equation} \label{eq:Y=}
X_{S^{-1}(t)} = Y(t)\;,\quad \forall t\geq 0\;.
\end{equation}
and therefore, the process $S^{-1}$ is called the clock process.

Conversely, let $\mathcal{E}, X$ and $\omega$ be independent, with 
distribution $Q, \tilde P$ and $\IP$ respectively, defined on some new 
probability space. Then, fixing $\lambda$
and viewing $\beta$ as a parameter, by (\ref{eq:S_n}) and (\ref{eq:Y=}) 
we construct, on this  new probability space, a coupling of the processes 
$Y=Y^{(\beta)}$ for all $\beta \in \R$. The coupling has the properties that 
the skeleton is the same for all $\beta$, and that the clock processes
are such that for $\beta \geq \beta'$ and $t \geq 0$,
\begin{equation} \label{eq:coupling}
S^{-1}(\beta; t) \leq S^{-1}(\beta'; t).
\end{equation}

\medskip

\noindent
{\bf The environment seen from the walker.}
Depending on the time being discrete or continuous, we consider
the processes $(\tilde{\omega}_n)_{n \in \mathbb{N}}$ and 
$(\hat{\omega}_t)_{t \geq 0}$ defined by
\begin{equation*}
 \tilde{\omega}_n=T^{X_n}\omega\;,\quad 
\hat{\omega}_t = T^{Y_t}\omega = \tilde{\omega}_{S^{-1}(t)}
\end{equation*}
for $n \geq 0, t \geq 0$. We start with the case of discrete time.
\begin{lemma}
\label{lemma:ergodique}
 Under $P$, $(\tilde{\omega}_i)_{i\in \mathbb{N}}$ 
is a stationary ergodic Markov chain.
The same holds for 
$(\tilde{\omega}_i, \mathcal{E}_i)_{i\in \mathbb{N}}$.
\end{lemma}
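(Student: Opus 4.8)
The plan is to verify the two assertions separately, starting with the Markov property, then stationarity, then ergodicity.

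First, the Markov property of $(\tilde\omega_n)_{n\ge 0}$ under $P$. Recall that $X$ is the simple random walk with drift under $\tilde P$, independent of $\omega$ under $\IP$. Conditionally on $\tilde\omega_0 = T^{X_0}\omega$, i.e.\ on the environment seen from the current position, the next increment $X_{n+1}-X_n = e$ is chosen with probability $\tilde p_e$ \emph{independently of the past and of $\omega$}, and then $\tilde\omega_{n+1} = T^e \tilde\omega_n$. Thus $(\tilde\omega_n)$ is a Markov chain on $\Omega$ with transition kernel $R(\omega, \cdot) = \sum_{e\sim 0} \tilde p_e\, \delta_{T^e\omega}$. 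The same computation, appending the i.i.d.\ $\mathcal E_n$ (which are independent of everything), shows $(\tilde\omega_n, \mathcal E_n)$ is Markov with kernel $R\otimes \mathcal{E}xp(1)$.

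Second, stationarity. Here I would invoke the reversibility/invariance structure already noted in the paper: the static law $\IP$ is invariant for the kernel $R$. Concretely, one checks $\int R(\omega, A)\, \IP(\dd\omega) = \sum_e \tilde p_e\, \IP(T^{-e}A) = \IP(A)$, using that $\IP$ is a product measure, hence translation invariant, and that $\sum_e \tilde p_e = 1$. So $\IP$ is stationary for $R$, and since under $P$ we have $\tilde\omega_0 = \omega$ distributed as $\IP$, the chain $(\tilde\omega_n)$ is stationary. Adjoining the i.i.d.\ sequence $\mathcal E$, independent of $(\tilde\omega_n)$, preserves stationarity.

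Third, and this is the main obstacle, ergodicity. The transition kernel $R$ alone is not ergodic in an obvious way (the walk only moves to nearest neighbors), so I would not try to prove a mixing estimate for $R$ directly. Instead, the clean route is to realize $(\tilde\omega_n)$ as a factor of a genuinely ergodic system: on the product space $\Omega \times (\mathbb Z^d)^{\mathbb N}$ carrying $\IP \otimes \tilde P$, let $\tau$ be the skew-product transformation $\tau(\omega, (x_k)_k) = (T^{x_0}\omega, (x_{k+1})_k)$ — i.e.\ the space shift on $\Omega$ driven by the first increment, together with the shift on the increment sequence. The increment sequence under $\tilde P$ is i.i.d., hence its shift is ergodic (even mixing); combined with the fact that $\IP$ is a product measure and the group $\mathbb Z^d$ acts ergodically on $(\Omega,\IP)$ by translations, one shows $\tau$ is ergodic for $\IP\otimes\tilde P$. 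A standard argument for such skew products over an ergodic base with an ergodic (Bernoulli) fibre gives ergodicity of $\tau$; the key point to check is that any $\tau$-invariant set, after integrating out the increments, yields a translation-invariant event in $\Omega$, which is trivial by the Kolmogorov $0$--$1$ law for the product measure $\IP$. Since $\tilde\omega_n = \pi\circ\tau^n$ for the projection $\pi(\omega,(x_k)) = \omega$ composed appropriately (more precisely $\tilde\omega_n$ is a measurable function of $\tau^n(\omega, \cdot)$), the process $(\tilde\omega_n)$ is a factor of the ergodic system $(\tau, \IP\otimes\tilde P)$, hence ergodic. Finally, adjoining $\mathcal E$: enlarge the skew product by a further i.i.d.\ Bernoulli($\mathcal{E}xp(1)$) factor with the shift; a product of an ergodic system with a Bernoulli system is ergodic, so $(\tilde\omega_n,\mathcal E_n)$ is ergodic as well. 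I expect the bookkeeping in identifying the invariant $\sigma$-field of the skew product — making precise that invariance forces translation-invariance on the $\omega$-marginal — to be the only delicate point; everything else is routine.
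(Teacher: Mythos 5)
Your verification of the Markov property and of the stationarity of $\IP$ under the kernel $R$ is correct and is exactly what the paper does, and adjoining the i.i.d.\ sequence $\mathcal{E}$ at the end is unproblematic (a direct product of an ergodic system with a Bernoulli shift is ergodic). The problem is the ergodicity step, where you have dismissed as "trivial" precisely the point that carries the whole proof. If $A$ is a $\tau$-invariant set for your skew product and $g(\omega):=\int 1_A(\omega,x)\,\tilde{P}(\dd x)$, then integrating out the increments in the relation $1_A(\omega,x)=1_A(T^{x_0}\omega,\theta x)$ does \emph{not} give a translation-invariant event: it gives only the harmonicity relation $g=Rg$, with $Rg(\omega)=\sum_{e\sim 0}\tilde{p}_e\,g(T^e\omega)$. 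The Kolmogorov $0$--$1$ law (ergodicity of the i.i.d.\ field under translations) disposes of translation-invariant functions, but says nothing about harmonic ones, and the passage from "harmonic'' to "translation invariant'' is the crux. The paper supplies it: since each $T^e$ preserves $\IP$ and $Rg=g$, one gets $\sum_{e\sim 0}\tilde{p}_e\int (g-g\circ T^e)^2\,\dd\IP=0$ (equality in the $L^2$ convexity estimate), hence $g\circ T^e=g$ $\IP$-a.s.\ for every $e$ with $\tilde{p}_e>0$, and only then does triviality of translation-invariant functions apply. Relatedly, there is no general theorem that a skew product over a Bernoulli base with an "ergodic fibre'' is ergodic; ergodicity of your $\tau$ is essentially equivalent to the statement being proved, and it genuinely uses the link between the support of the step law and the translation action (if the steps generated a proper subgroup acting non-ergodically, $\tau$ would fail to be ergodic), so it cannot be invoked as routine.

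There is a second, smaller gap at the end of your argument: even once you know the marginal $g$ is $\IP$-a.s.\ constant, you must still conclude that $1_A$ itself is a.s.\ constant, i.e.\ $\IP\otimes\tilde{P}(A)\in\{0,1\}$. The clean way is the martingale identity $E[1_A\mid\sigma(\omega,x_0,\dots,x_{n-1})]=g(\tilde{\omega}_n)$ together with martingale convergence, which is exactly the stationary-martingale step in the paper's proof (the identity $h_Y(\tilde{\omega}_k)=E[Y\mid\mathcal{F}_k]$ and its consequence $Y=h_Y(\tilde{\omega}_0)$). Once you add these two ingredients --- the harmonicity-to-invariance computation and the martingale step --- your skew-product/factor formulation becomes a correct proof, but at that point it is a repackaging of the paper's argument rather than a genuinely softer route.
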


\begin{proof} [Proof of Lemma \ref{lemma:ergodique}]
As $(\mathcal{E}_i)_{i\in\mathbb{N}}$ is an i.i.d. sequence of variables independent of  $\tilde{\omega}$, it is enough to prove Lemma \ref{lemma:ergodique} for the process $(\tilde{\omega}_i)_{i\in \mathbb{N}}$.
Under $P$ (resp $P_{\omega}$)  $(\tilde{\omega}_i)_{i\in \mathbb{N}}$ is markovian  with 
transition kernel $R$ defined for any bounded function $f$ by
\begin{equation*}
 Rf(\omega)=\sum_{e\sim 0}{\tilde p}_ef(T^e \omega)
\quad \forall \omega \in \Omega,
\end{equation*}
and initial distribution $\mathbb{P}$ (resp $\delta_{\omega}$).
The transitions of $(\tilde{\omega}_i)_{i\in \mathbb{N}}$ does not depend on $\omega$ like those of $X$ and, in this sense, the sequence 
is itself a random walk.
Since $\mathbb{P}$ is invariant by translation,
\begin{equation*}
 E[f(\tilde{\omega}_1)]=\int\sum_{e \sim 0}{\tilde p}_ef(T^e\omega)d\mathbb{P}=\sum_{e \sim 0}{\tilde p}_e\int f(T^e\omega)d\mathbb{P}=\mathbb{E}[f(\omega)],
\end{equation*}
showing that  $\mathbb{P}$ is an invariant measure for  
$(\tilde{\omega}_i)_{i\in \mathbb{N}}$. 

We will use $\mathcal{F}$ to denote the product $\sigma$-field on $\Omega^{\mathbb{N}}$, and for any $k\geq 0$, $\mathcal{F}_k$ will denote the $\sigma$-field generated by the $k$ first coordinates.  Note that $\theta$ is measurable and preserves the law of $\tilde{\omega}$ under $P$.
We have to prove that the invariant $\sigma$-field $\Sigma:=\{A\in \mathcal{F}, 1_A(\tilde{\omega})=1_A(\theta\tilde{\omega}),\ P\textrm{-a.s.}\}$ is trivial. Let $Y$ be a $\Sigma$-mesurable bounded random variable on $\Omega^{\mathbb{N}}$, we have to show that it is $P$-a.s. constant.

Define for all $\omega$ in $\Omega$, $h_Y(\omega):=E_{\omega}[Y]$. We will study this function with standard arguments e.g. chapter 17.1.1 of \cite{Meyn_Tweedie}. Using Markov property and the $\theta$-invariance of $Y$, we can show that,
\begin{equation}
 \label{eq:martingale}
h_Y(\tilde{\omega}_k)=E[Y|\mathcal{F}_k]\qquad\forall k\in\mathbb{N},\ P\textrm{-a.s.}
\end{equation}
As a consequence, under $P$, $(h_Y(\tilde{\omega}_k))_{k\geq 0}$ is both a stationary process and an a.s. convergent martingale, and hence it is a.s. constant. In particular,
\begin{equation*}
 Y=h_Y(\tilde{\omega}_0)\qquad P\textrm{-a.s.,}
\end{equation*}
what means that $Y$ can be consider as a function of the first coordinate alone.
The next step is to show that $h_Y$ is $\mathbb{P}$-a.s. harmonic, that is
\begin{equation*}
 Rh_Y(\tilde{\omega}_0)=h_Y(\tilde{\omega}_0),\quad P\textrm{-a.s.}
\end{equation*}
It is a consequence of the following computation,
\begin{align*}
 Rh_Y(\tilde{\omega}_0)&=E[h_Y(\tilde{\omega}_1)|\mathcal{F}_0]\qquad P\textrm{-a.s.}\\
			&=E[E[Y|\mathcal{F}_1]|\mathcal{F}_0]\qquad P \textrm{-a.s.}\\
			&=h_Y(\tilde{\omega}_0)\qquad P\textrm{-a.s.},
\end{align*}
where the second equality is true because of (\ref{eq:martingale}).
We will now show that $Y$ is invariant by translation in space. By invariance 
of $\mathbb{P}$ and harmonicity of $h_Y$, it is true that 
\begin{equation*}
 \sum_{e \sim 0}\int {\tilde p}_e(Y-Y\circ T^e)^2 d\mathbb{P}=0.
\end{equation*}
For every $e$ neighbour of $0$, ${\tilde p}_e>0$, and the previous equation implies that, $\mathbb{P}$ almost surely $Y=Y\circ T^e$ for any $e\sim 0$.
Together with the ergodicity of $\mathbb{P}$, this shows that $Y$ is $P$-a.s.
constant, and completes the proof.
\end{proof}

As a consequence of Lemma \ref{lemma:ergodique} and Birkhoff's ergodic theorem, for any fucntion 
$f$ in $L_1(\Omega^{\mathbb{N}})$ (or $f$ non negative),
\begin{equation*}
 \frac{1}{n}\sum_{k=0}^{n-1} f(\theta^k\tilde{\omega})\xrightarrow{n\to+\infty}E[f]\qquad P\textrm{- a.s.}
\end{equation*}

Now, we turn to the time continuous case, and we consider the empirical 
distribution $\frac{1}{t} \int_0^t \delta_{\hat \omega_s} ds$
of the environment seen from the walker up to time $t$. 
Our next result is
a law of large numbers for this random probability measure.
For small $\beta$, the empirical distribution converges to some limit 
$\mathbb{P}^0$, which is then an invariant measure for 
$(\hat \omega_t)_{t \geq 0}$.

\begin{corollary}
\label{corollary:empirique}
 If $\beta<\xi$ then $P$-almost surely, the empirical distribution
of the environment seen from the walker,
$\frac{1}{t} \int_0^t \delta_{\hat \omega_s} ds$, 
converges weakly to $\mathbb{P}^0$ defined by $d\mathbb{P}^0=\frac{e^{\beta C}}{\mathbb{E}[e^{\beta C}]}d\mathbb{P}$.
\end{corollary}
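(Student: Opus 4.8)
\emph{Proof sketch.} The plan is to reduce the weak convergence to a one–dimensional ergodic limit. Since $\Omega=\{0,1\}^{\mathbb{Z}^d}$ is compact metrizable, it suffices to fix a countable convergence-determining family of bounded continuous functions $f$ and prove, for each of them, that $t^{-1}\int_0^t f(\hat\omega_s)\,ds\to \mathbb{E}^0[f]:=\mathbb{E}[f\,e^{\beta C_0}]/\mathbb{E}[e^{\beta C_0}]$ $P$-almost surely, and then intersect the countably many exceptional null sets. The key observation is that, by (\ref{eq:Y=}), $\hat\omega_s=\tilde\omega_n$ on the interval $[S_n,S_{n+1})$, whose length is $\mathcal{E}_n e^{\beta C_{X_n}}$ by (\ref{eq:S_n}), and that $C_{X_n}=C_0(\tilde\omega_n)$; hence $\int_0^{S_N} f(\hat\omega_s)\,ds=\sum_{n=0}^{N-1} f(\tilde\omega_n)\,\mathcal{E}_n\,e^{\beta C_0(\tilde\omega_n)}$, which is a Birkhoff sum over the stationary ergodic chain $(\tilde\omega_n,\mathcal{E}_n)_n$ of Lemma \ref{lemma:ergodique}.

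Next I would apply Birkhoff's theorem to that chain with the observable $g(\omega,e)=f(\omega)\,e\,e^{\beta C_0(\omega)}$. This $g$ lies in $L^1(P)$ precisely because $f$ is bounded, $\mathcal{E}_0$ has mean $1$ and is independent of $\tilde\omega$, and $\mathbb{E}[e^{\beta C_0}]<\infty$ — this last point, and only this, is where the hypothesis $\beta<\xi$ enters. One then obtains $N^{-1}\int_0^{S_N} f(\hat\omega_s)\,ds\to E[g]=\mathbb{E}[f\,e^{\beta C_0}]$; specializing to $f\equiv 1$ gives $S_N/N\to \mathbb{E}[e^{\beta C_0}]$, so that $S_N^{-1}\int_0^{S_N} f(\hat\omega_s)\,ds\to \mathbb{E}^0[f]$ along the sequence of jump times.

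Finally I would interpolate between jump times: for $t\in[S_N,S_{N+1})$, with $N=S^{-1}(t)\to\infty$ as $t\to\infty$ since $S_n\to\infty$, splitting $\int_0^t=\int_0^{S_N}+\int_{S_N}^t$ yields $\big|\,t^{-1}\int_0^t f(\hat\omega_s)\,ds-S_N^{-1}\int_0^{S_N} f(\hat\omega_s)\,ds\,\big|\leq 2\|f\|_\infty (S_{N+1}-S_N)/S_N$. The main (and only mildly delicate) obstacle is to see that this error vanishes, i.e. that the long holding time in progress at time $t$ is negligible on the scale $S_N\sim N\,\mathbb{E}[e^{\beta C_0}]$: this follows from the elementary fact that $a_N/N\to 0$ whenever $N^{-1}\sum_{n<N}a_n$ converges, applied to $a_n=S_{n+1}-S_n$, which gives $(S_{N+1}-S_N)/S_N\to 0$. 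Combining the two estimates completes the argument. Everything apart from this last interpolation step is routine once one identifies $(\tilde\omega_n,\mathcal{E}_n)$ as the relevant stationary ergodic sequence and $C_{X_n}$ as a function of $\tilde\omega_n$.
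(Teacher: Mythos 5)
Your argument is correct and follows essentially the same route as the paper: reduce to convergence along the jump times $t=S_n$, write numerator and denominator as Birkhoff sums for the stationary ergodic chain $(\tilde{\omega}_n,\mathcal{E}_n)$ of Lemma \ref{lemma:ergodique}, and use $\beta<\xi$ only to ensure $\mathbb{E}[e^{\beta C_0}]<\infty$. The only difference is that you spell out the interpolation between jump times (the Ces\`aro bound $(S_{N+1}-S_N)/S_N\to 0$) and the countable convergence-determining family, details the paper leaves implicit.
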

\begin{proof}[Proof of Corollary \ref{corollary:empirique}]
We need to show that ${t}^{-1} \int_0^t f({\hat \omega_s}) ds
\to \int f d\mathbb{P}^0$ as $t \to \8$, for all 
real bounded continuous function $f$ on $\Omega$. Since $e^{\beta C_{0}}$
is integrable when $\beta < \xi$, this follows from
the convergence along the sequence $t=S_n$, $n \to \8$. 
By (\ref{eq:S_n}), this is equivalent to
$$
\frac{n^{-1}\sum_{i=0}^{n-1}  \mathcal{E}_i e^{\beta C_{X_i}}
f({\tilde{\omega}_i})}{n^{-1}\sum_{i=0}^{n-1}  \mathcal{E}_i e^{\beta C_{X_i}}}
 \longrightarrow  \int_\Omega f d\mathbb{P}^0\;, \quad
n \to \8.
$$
We first study the $P$-almost sure convergence of the denominator, i.e.
of $n^{-1}{S_n}$. Define the real function $g$ on 
$(\mathbb{R}^{\mathbb{N}},\Omega^{\mathbb{N}})$
\begin{equation*}
g: ((\mathcal{E}_i)_{i\in \mathbb{N}},(\tilde{\omega}_{i})_{i\in\mathbb{N}})
\mapsto \mathcal{E}_0e^{\beta C_0(\tilde{\omega}_0)}
\end{equation*}
and note that $C_{X_n}=C_0(\tilde{\omega}_n)$.
Applying Lemma \ref{lemma:ergodique} and the ergodic theorem to $(\tilde{\omega},\mathcal{E})$ and to the non negative function $g$, we obtain that $n^{-1}{S_n}$ 
converges $P$-almost surely to $\mathbb{E}[e^{\beta C_0}]$.
The numerator can be studied with the same arguments, and we obtain the claim
since for $\beta < \xi$ both limits are finite.
\end{proof}

With this in hand, we can easily complete the 

\begin{proof}[Proof of Theorem \ref{theorem:lln}]
Write 
\begin{equation*}
 \frac{Y_t}{t}=\frac{X_{S^{-1}(t)}}{S^{-1}(t))}\;\frac{S^{-1}(t)}{S(S^{-1}(t))}
\;\frac{S(S^{-1}(t))}{t}\;.
\end{equation*}
Recall from (\ref{eq:LLNX}) that the first factor in the right-hand side
converge almost surely to $d(\lambda)$ as $t \to \8$. In the proof of
Corollary \ref{corollary:empirique} we have shown that 
${S(S^{-1}(t))}/{S^{-1}(t)} \to \mathbb{E}[e^{\beta C_0}]$ a.s. for 
 $\beta < \xi$, but clearly the result remains true for all $\beta$ (the limit 
is infinite for  $\beta > \xi$). For the last  factor in the right-hand side
we simply observe that 
\begin{equation}
\label{equation:encadrement}
 \frac{S(S^{-1}(t))}{S(S^{-1}(t)+1)} \leq \frac{S(S^{-1}(t))}{t} \leq 1\;,
\end{equation}
yielding that  ${S(S^{-1}(t))}/{t}$ converges $P$-almost surely to $1$
if $\mathbb{E}[e^{\beta C_0}]<\8$: in this case, we then conclude that 
${Y_t}/{t}$ converges $P$-almost surely to $v(\lambda, \beta)$ given by
(\ref{eq:vitesse}). 

In the case $\mathbb{E}[e^{\beta C_0}]=\8$, we just use the right inequality in (\ref{equation:encadrement}) to obtain the $P$-almost surely convergence of ${Y_t}/{t}$ to $v(\lambda, \beta)=0$.
\end{proof}

\section{Subballistic regime, and the proofs of Theorem \ref{theorem:vitesse}
and \ref{theorem:diff}}
\label{sec:Subballistic}

We start with a few auxiliary results.

\begin{lemma}
\label{lemma:range}
Assume $d \geq 2$ or $\lambda > 0$. 
Then, for any $\epsilon>0$, there exists $\alpha>0$ 
such that $P$-almost surely, we eventually have
\begin{equation*}
 \sharp\Big\{i\leq n,\ C_{X_i}>(\frac{1}{\xi}-\epsilon)\ln n 
\Big\}\geq n^{\alpha}
\end{equation*}
with the notation $ \sharp A$ for the cardinality of a set $A$.
\end{lemma}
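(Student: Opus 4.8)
The plan is to exploit the fact that the skeleton $X$ visits order $n$ distinct sites among its first $n$ steps (because $X$ is a transient random walk on $\Z^d$ when $d \geq 2$, and because it has a drift when $\lambda > 0$), and that the environment along the walk is stationary ergodic by Lemma~\ref{lemma:ergodique}. The target event counts indices $i \leq n$ where the cluster $C_{X_i}$ is large, of size at least $(\tfrac1\xi - \epsilon)\ln n$; I want to show this count is at least polynomial in $n$.

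First I would reduce to counting large clusters along the \emph{range} of the walk rather than along the time-indexed trajectory. By transience (for $d\geq 2$) or by ballisticity (for $\lambda>0$, using the LLN \eqref{eq:LLNX} for $X$), there is a constant $c>0$ such that $P$-a.s. eventually the range $\{X_0,\dots,X_{n-1}\}$ contains at least $cn$ distinct sites. It then suffices to show that among any $cn$ distinct sites, at least $n^\alpha$ of them have cluster size exceeding $(\tfrac1\xi-\epsilon)\ln n$; this is now essentially a statement about the i.i.d.\ (up to percolation correlations) field $(C_x)_x$ evaluated on a large deterministic-sized set, converted to the random range by a suitable ergodic/coupling argument. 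Concretely, for a fixed box $\Lambda_N = [-N,N]^d\cap\Z^d$ I would estimate, using the exponential-tail asymptotics $\IP(C_0 \geq m) = e^{-\xi m + o(m)}$, the number of sites $x\in\Lambda_N$ with $C_x > (\tfrac1\xi-\epsilon)\ln|\Lambda_N|$; the expected count is of order $|\Lambda_N|\cdot e^{-\xi(\frac1\xi-\epsilon)\ln|\Lambda_N|(1+o(1))} = |\Lambda_N|^{\epsilon\xi + o(1)}$, a positive power of the volume. A second-moment or concentration argument (handling the short-range percolation correlations via a block decomposition, so that far-apart blocks are independent and the FKG inequality controls the rest) shows this count is, with probability tending to $1$ fast enough to apply Borel--Cantelli, at least a slightly smaller positive power of the volume, say $|\Lambda_N|^{\alpha'}$.

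Then I would transfer this from boxes to the range of $X$. Since the range after $n$ steps is contained in a box of radius $O(n)$ but contains $\geq cn$ points, and the ``good'' sites (large cluster) are spread out through space with positive density in the power-law sense above, I need that the walk's range captures a polynomial fraction of the good sites. The cleanest way is to use Lemma~\ref{lemma:ergodique} directly: the sequence $\tilde\omega_i = T^{X_i}\omega$ is stationary ergodic, so $\tfrac1n\sum_{i=0}^{n-1}\ind\{C_0(\tilde\omega_i) > a_n\}$ with a \emph{fixed} threshold $a$ converges to $\IP(C_0 > a)>0$; the subtlety is that the threshold $a_n = (\tfrac1\xi-\epsilon)\ln n$ grows with $n$, so Birkhoff does not apply off the shelf. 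I would handle this by a diagonal/truncation argument: fix a large constant threshold $a = a(\delta)$, get linear-in-$n$ many visits to sites with $C_{X_i} > a$ by the ergodic theorem, and then among those sites argue that a polynomial fraction actually have $C_{X_i} > (\tfrac1\xi-\epsilon)\ln n$ — this last step again uses the tail asymptotics together with the (near-)independence of distinct clusters, now restricted to the visited sites. Alternatively, and perhaps more robustly, one partitions $[0,n]$ into $n^{1-\eta}$ blocks of length $n^\eta$, observes that the walk in disjoint time-blocks explores essentially disjoint regions (for the ballistic case this is immediate; for $d\geq 2$ transient it requires a little work), and applies the box estimate independently on each block, then sums.

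\textbf{Main obstacle.} The delicate point is precisely the interaction between the \emph{growing} threshold $(\tfrac1\xi-\epsilon)\ln n$ and the \emph{random} set of sites visited by $X$: one cannot simply invoke Birkhoff's theorem because the event depends on $n$. Making the block/coupling argument rigorous — in particular ensuring enough independence across blocks despite (a) the percolation correlations and (b) the recurrence/self-intersection structure of $X$ in the $d\geq 2$, $\lambda=0$ case — is where the real work lies; the choice of $\alpha$ will come out of balancing the block length against the polynomial decay $|\Lambda_N|^{\epsilon\xi}$ in the tail estimate, and one must check that $\alpha$ can be taken strictly positive uniformly in the relevant regime.
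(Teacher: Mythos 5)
The central difficulty you flag --- the growing threshold $(\tfrac1\xi-\epsilon)\ln n$ versus the random visited set --- is indeed the crux, but your proposal leaves it unresolved, and the resolution used in the paper is both simpler and different from what you sketch. Two observations you do not exploit make the problem elementary: (a) the skeleton $X$ is independent of $\omega$ (its transition kernel does not involve the environment), so one may condition on the whole path and treat the visited sites as deterministic; (b) the event $\{C_x \leq m\}$ with $m=(\tfrac1\xi-\epsilon)\ln n$ is measurable with respect to $\sigma\{\omega_y:\ y \in B(x,m)\}$, so cluster-size events at sites separated by more than $2m$ are \emph{exactly} independent under $\IP$ --- no FKG inequality, second-moment bound, or block coupling is needed. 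The paper's proof extracts from the first $n$ steps a family of visited points pairwise at distance greater than $2(\tfrac1\xi-\epsilon)\ln n$: since the range is at least $c\,n/\ln n$ ($c\,n$ when $\lambda>0$) and each ball of radius $O(\ln n)$ contains $O(\ln^d n)$ sites, one gets $K_n \geq c\,n/\ln^{d+1} n$ such points; conditionally on the path, the probability that fewer than $n^\alpha$ of them lie in a cluster of size larger than $(\tfrac1\xi-\epsilon)\ln n$ is at most $\binom{K_n}{[n^\alpha]}\bigl(1-n^{-(1-\epsilon\xi)+o(1)}\bigr)^{K_n-n^\alpha}$, which is summable once $\alpha<\min(1,\epsilon\xi)$, and Borel--Cantelli concludes. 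Your scheme (box count, concentration, then ``transfer to the range'') never closes the loop: the transfer step, the ``near-independence of distinct clusters restricted to the visited sites,'' and the claim that disjoint time blocks explore ``essentially disjoint regions'' are exactly the statements requiring proof, and the last one fails as stated in the recurrent case $d=2$, $\lambda=0$, which the lemma covers.

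There are also two concrete inaccuracies. For $d=2$, $\lambda=0$ the walk is recurrent, not transient, and its range is of order $n/\ln n$, not $cn$; this still suffices (it only costs a logarithmic factor in $K_n$), but your repeated appeal to transience for all $d\geq 2$ means the $d=2$, $\lambda=0$ case is not actually handled by your plan. Moreover, your truncation/ergodic route --- fix a constant threshold $a$, get linearly many visits to sites with $C_{X_i}>a$, then ``upgrade'' a polynomial fraction of them to clusters of size $(\tfrac1\xi-\epsilon)\ln n$ --- cannot work as stated: conditioning on $C_x>a$ gives no control toward $C_x>(\tfrac1\xi-\epsilon)\ln n$ without a fresh independence argument over well-separated sites, which is precisely the missing decoupling device described above.
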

\begin{proof}[Proof of Lemma \ref{lemma:range}]
Define the range $R_n$ as the number of points visited by $(X_i)_{i\in\mathbb{N}}$ during the first $n$ steps. For $\lambda >0$, there exists a constant $c_1>0$ such that $\tilde{P}$-almost surely eventually $R_n>c_1 n$.
For $\lambda=0$ and $d>2$, it is well known (see chapter 21 of \cite{Revesz}) that there exists a constant $c_2$ such that  $\tilde{P}$-almost surely eventually $R_n >c_2 \frac{n}{\ln n}$ (when $d\geq 3$, the walk is transient and the correct order of $R_n$ is $n$). In all cases, there exists a constant $c_3>0$ such that under the assumptions of  Lemma \ref{lemma:range}, we have $\tilde{P}$-almost surely, eventually, $R_n>c_3 \frac{n}{\ln n}$.
For a fixed $n$ in $\mathbb{N}$, we define recursively the time $T^n_i$ by
\begin{align*}
 T_0^n&=0,\\
 T_i^n&=\inf\{T_{i-1}^n <k\leq n,\ |X_k-X_{T_j^n}|>2(\frac{1}{\xi}-\epsilon)\ln n,\ \forall j<i\}\ \quad \forall i\geq 1,\\
\inf\emptyset&=+\infty.
\end{align*}
Note that the balls with center $X_{T_j^n}$ and radius 
$({\xi}^{-1}-\epsilon)\ln n$ are pairwise disjoint, and define 
$K_n$ the number of such balls, i.e.
\begin{equation*}
 K_n=\max\{i\geq 0:\ T_i^n<+\infty\}
\end{equation*}
As the cardinality of those ball is $c_4\ln^d n$ (for some $c_4>0$), it follows from the previous discussion on the range that $\tilde{P}$-almost surely, eventually, $K_n>c\frac{n}{\ln^{d+1}n}$, where $c$ denotes a positive constant. 
From now on we fix a path $(X_i)_{i \leq n}$ such that  $K_n>c\frac{n}{\ln^{d+1}n}$. Then,
\begin{align*}
& \!\!\!\!\!\!\!\!\!\!
\mathbb{P}\Big(\sharp\big\{i\leq K_n,\ C_{X_{T^n_i}}\leq(\frac{1}{\xi}-\epsilon)\ln n\big\}\geq K_n-n^{\alpha}\Big)\\
&= \mathbb{P}\Big( \exists I \subset \{1,\ldots K_n\}, \sharp I = K_n - 
[n^\alpha]: \forall i \in I,\; C_{X_{T^n_i}}\leq(\frac{1}{\xi}-\epsilon)\ln n\
\Big)\\
&\leq \sum_{I \subset \{1,\ldots K_n\}, \sharp I = K_n - 
[n^\alpha]} \mathbb{P}\Big(\forall i \in I,\; C_{X_{T^n_i}}\leq(\frac{1}{\xi}-\epsilon)\ln n\
\Big)
\end{align*}
For all $j$ such that $0\leq j\leq K_n-n^{\alpha}$, $B^n_i$ denotes the ball with center $X_{T^n_{i}}$ and  radius $(\frac{1}{\xi}-\epsilon)\ln n$. The event $\{C_{X_{T^n_{i}}}\leq (\frac{1}{\xi}-\epsilon)\ln n)\}$ is $\sigma\{\omega_x, x\in B^n_i\}$ measurable. As the balls $B_i^n$ are disjoint and the environment is i.i.d.,
\begin{align*}
&\!\!\!\!\!\!\!\!\!\!
\mathbb{P}\Big(\sharp\big\{i\leq K_n,\ C_{X_{T^n_i}}\leq(\frac{1}{\xi}-\epsilon)\ln n\big\}\geq K_n-n^{\alpha}\Big)\\
&\leq {K_n \choose n^{\alpha}} \big(1-\mathbb{P}(C_0>(\frac{1}{\xi}-\epsilon)\ln n)\big)^{K_n-n^{\alpha}}\\
&\leq c_5 n^{n^{\alpha}}\big(1-n^{-(1-\epsilon\xi)+o(1)}\big)^{c\frac{n}{\ln^{d+1}n}}, 
\end{align*}
for some suitable constant $c_5 > 0$.
We now choose $\alpha<\min(1,\epsilon\xi)$, so that 
$$ \sum_n
\mathbb{P}\Big(\sharp\big\{i\leq K_n,\ C_{X_{T_i}}\leq(\frac{1}{\xi}-\epsilon)\ln n\big\}\geq K_n-n^{\alpha}\Big)< \8
$$ 
We conclude using Borel-Cantelli's lemma.
\end{proof}

\begin{lemma}
\label{lemma:limsup}
Assume $\beta > \xi$. For $d \geq 2$ or $\lambda>0$, we have
 $\liminf_n \frac{\ln S_n}{\ln n}\geq\frac{\beta}{\xi}$, $P$-almost surely.
\end{lemma}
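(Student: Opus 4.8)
The plan is to combine Lemma~\ref{lemma:range} with the elementary lower bound $S_n \geq \mathcal{E}_i e^{\beta C_{X_i}}$ valid for every single index $i \leq n$ (this is immediate from the representation \eqref{eq:S_n}, since all summands are non-negative). Fix $\epsilon>0$ small, to be sent to $0$ at the end. Lemma~\ref{lemma:range} guarantees an $\alpha=\alpha(\epsilon)>0$ such that, $P$-almost surely and eventually in $n$, there are at least $n^\alpha$ indices $i\leq n$ with $C_{X_i} > (\tfrac1\xi - \epsilon)\ln n$; call this set of indices $\mathcal{I}_n$. For such indices, $e^{\beta C_{X_i}} \geq n^{\beta(1/\xi - \epsilon)}$, hence
\begin{equation*}
S_n \;\geq\; \max_{i \in \mathcal{I}_n} \mathcal{E}_i\, e^{\beta C_{X_i}} \;\geq\; n^{\beta(1/\xi - \epsilon)} \max_{i \in \mathcal{I}_n} \mathcal{E}_i .
\end{equation*}

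The remaining point is to control the maximum of the i.i.d.\ mean-one exponentials $\mathcal{E}_i$ over the (at least $n^\alpha$) indices in $\mathcal{I}_n$ from below. Because $X$ and $\mathcal{E}$ are independent under $Q\times\tilde P$, I can condition on the skeleton (equivalently on $\mathcal{I}_n$) and use that $\max$ of $m$ i.i.d.\ unit exponentials exceeds, say, $1$ with probability $1-(1-e^{-1})^m$, which for $m \geq n^\alpha$ gives a failure probability decaying faster than any power of $n$; summing over $n$ and applying Borel--Cantelli shows that $P$-almost surely, eventually, $\max_{i\in\mathcal{I}_n}\mathcal{E}_i \geq 1$. (One has to be mildly careful that $\mathcal{I}_n$ depends on the environment and the skeleton but not on $\mathcal{E}$, so the conditioning is legitimate; also $\mathcal{I}_n$ has at least $n^\alpha$ elements, and a lower bound on the max over a larger set only helps.) Consequently $S_n \geq n^{\beta(1/\xi-\epsilon)}$ eventually, so $\liminf_n \frac{\ln S_n}{\ln n} \geq \beta(\tfrac1\xi - \epsilon)$ almost surely, and letting $\epsilon\downarrow 0$ along a countable sequence finishes the proof.

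The only genuinely delicate step is making sure the measurability/independence bookkeeping in the Borel--Cantelli argument is clean: $\mathcal{I}_n$ is determined by $(\omega, X)$ while the $\mathcal{E}_i$ are independent of $(\omega,X)$, so conditionally on $(\omega,X)$ the variables $(\mathcal{E}_i)_{i\in\mathcal{I}_n}$ are still i.i.d.\ unit exponentials and the tail estimate applies uniformly. Everything else is a one-line deterministic bound from \eqref{eq:S_n} plus Lemma~\ref{lemma:range}, which already did the combinatorial heavy lifting.
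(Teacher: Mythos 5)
Your proof is correct and is essentially the same as the paper's: both rest on Lemma \ref{lemma:range}, the pointwise bound $S_n\geq \mathcal{E}_i e^{\beta C_{X_i}}$, and a Borel--Cantelli argument with the estimate $(1-e^{-1})^{n^{\alpha}}$ obtained by conditioning on $(\omega,X)$. The only difference is cosmetic: the paper bounds $Q(S_n<n^{\beta/\xi-\eta})$ by the probability that all exponentials indexed by $I$ are below $1$, which is exactly your event $\max_{i\in\mathcal{I}_n}\mathcal{E}_i<1$.
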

\begin{proof}[Proof of Lemma \ref{lemma:limsup}]
Let $\eta$ be a positive real number. 
With $\epsilon:=\eta/\beta$,
from Lemma \ref{lemma:range}, there exists $\alpha>0$ such that  $\tilde{P}\otimes\mathbb{P}$-almost surely, there exists a natural number $N=N(X,\omega)$ such that for $n>N$, the set $I=\{i\leq n,\ C_{X_i}>
(\frac{1}{\xi}-\epsilon)\ln n \}$ has cardinality  $\sharp I \geq n^{\alpha}$.
For $n>N$,
\begin{align*}
Q(S_n<n^{{\beta}/{\xi}-\eta}) &\leq 
Q(\mathcal{E}_i e^{\beta C_{X_i}} <n^{\beta /\xi-\eta},\; i \in I)\\
&<Q(\mathcal{E}_1 e^{\beta C_{X_i}} < n^{\beta/\xi-\eta})^{n^{\alpha}}\\
&<Q(\mathcal{E}_1 < n^{\beta \epsilon -\eta})^{n^{\alpha}}\\
&= (1-e^{-1})^{n^{\alpha}}.
\end{align*}
From previous inequality, we obtain that $Q(S_n<n^{{\beta}/{\xi}-\eta})$ is the general term of a convergent series and we can use Borel-Cantelli's Lemma to conclude.
\end{proof}

\begin{lemma}
\label{lemma:liminf} 
Assume $\beta > \xi$. For $d \geq 1$ and $\lambda \geq 0$, we have
$P$-almost surely, $\limsup_n \frac{\ln S_n}{\ln n}\leq\frac{\beta}{\xi}$.
\end{lemma}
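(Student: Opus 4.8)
Here is the plan I would follow.

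The plan is to show that for every $\eta>0$ one has $S_n\le n^{\beta/\xi+\eta}$ for all large $n$, $P$-almost surely, which is exactly $\limsup_n\frac{\ln S_n}{\ln n}\le\frac{\beta}{\xi}$. The obstruction is that, since $\beta>\xi$, the mean holding time $\E[e^{\beta C_0}]$ is infinite, so $S_n$ is not integrable and a naive first‑moment estimate on $S_n$ itself is useless. The remedy is that up to time $n$ the skeleton $X$ visits at most $n+1$ distinct sites, so the largest cluster it ever sits on is only of order $\xi^{-1}\ln n$; after truncating the cluster sizes at that scale the (truncated) clock has a polynomially bounded mean, and a Borel--Cantelli argument applies.

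To carry this out, fix $\xi_1\in(0,\xi)$ and $\epsilon>0$, recall from the exponential tail that $\IP(C_0\ge k)\le Ce^{-\xi_1 k}$ for all $k$, and set $b_n=\xi_1^{-1}(1+\epsilon)\ln n$. Conditioning on $X$, which is independent of $\omega$, and bounding by a union over the at most $2^{m+1}$ sites visited in the first $2^{m+1}$ steps, one gets $\IP\big(\max_{i<2^{m+1}}C_{X_i}>\xi_1^{-1}(1+\epsilon)m\ln 2\big)\le 2C\,2^{-\epsilon m}$, which is summable in $m$. Since the running maximum of $i\mapsto C_{X_i}$ is non-decreasing and $b_n\ge\xi_1^{-1}(1+\epsilon)m\ln2$ once $n\ge 2^m$, Borel--Cantelli gives that $P$-a.s., eventually $C_{X_i}\le b_n$ for every $i<n$; on that event $S_n=T_n$, where $T_n:=\sum_{i=0}^{n-1}\mathcal{E}_i e^{\beta C_{X_i}}\mathbf{1}\{C_{X_i}\le b_n\}$. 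Hence it suffices to control $T_n$.

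Since $\mathcal{E}$ is independent of $(X,\omega)$ and $C_{X_i}$ has, under $\IP$, the law of $C_0$, conditioning on $X$ gives $\E[T_n\mid X]=n\,\E[e^{\beta C_0}\mathbf{1}\{C_0\le b_n\}]$, and bounding $\E[e^{\beta C_0}\mathbf{1}\{C_0\le b_n\}]\le\sum_{c\le b_n}e^{\beta c}\IP(C_0\ge c)\le C'e^{(\beta-\xi_1)b_n}$ (an essentially geometric series, using $\beta>\xi>\xi_1$) yields $\E[T_n\mid X]\le C'n^{\rho}$ with $\rho:=1+(1+\epsilon)(\beta/\xi_1-1)$. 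Markov's inequality along $n=2^m$ makes $\IP(T_{2^m}\ge 2^{m(\rho+\eta)}\mid X)\le C'2^{-\eta m}$ summable, so by Borel--Cantelli $T_{2^m}<2^{m(\rho+\eta)}$ eventually; as $n\mapsto T_n$ is non-decreasing (because $n\mapsto b_n$ is), interpolating between consecutive powers of $2$ gives $\limsup_n\frac{\ln T_n}{\ln n}\le\rho+\eta$, hence the same bound for $\limsup_n\frac{\ln S_n}{\ln n}$ by the previous step. Letting $\epsilon\downarrow0$, $\xi_1\uparrow\xi$ and $\eta\downarrow0$ along a countable sequence sends $\rho\downarrow\beta/\xi$, which is the claim. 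The argument uses only the trivial bound $\sharp\{X_0,\dots,X_n\}\le n+1$ and no lower bound on the range, which is precisely why, unlike Lemmas~\ref{lemma:range}--\ref{lemma:limsup}, the conclusion holds for all $d\ge1$ and all $\lambda\ge0$; the one point that needs care is matching the truncation scale $b_n$ to the dyadic Borel--Cantelli so as to land on the sharp exponent $\beta/\xi$ rather than a lossier one.
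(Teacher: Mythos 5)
Your proposal is correct, but it takes a genuinely different route from the paper. The paper's own proof is a short fractional-moment argument: for $\alpha\in(0,\xi/\beta)$, subadditivity of $u\mapsto u^\alpha$ gives $S_n^{\alpha}\le\sum_{i}\mathcal{E}_i^{\alpha}e^{\alpha\beta C_{X_i}}$, and since $\alpha\beta<\xi$ the summands are integrable, so the ergodic theorem for the environment seen from the particle (Lemma \ref{lemma:ergodique}) yields $\limsup_n S_n^{\alpha}/n<\infty$ a.s., hence $\limsup_n \ln S_n/\ln n\le 1/\alpha$, and letting $\alpha\uparrow\xi/\beta$ finishes. You instead truncate the cluster sizes: a union bound over the at most $n+1$ sites in the range (using independence of $X$ and $\omega$ and the tail $\IP(C_0\ge k)\le Ce^{-\xi_1 k}$) shows that the largest cluster met before time $n$ is a.s. eventually below $b_n\approx\xi^{-1}(1+\epsilon)\ln n$; then a conditional first-moment bound $\E[T_n\mid X]\le C'n^{\rho}$ on the truncated clock, Markov's inequality along dyadic times, Borel--Cantelli, and monotonicity of $n\mapsto T_n$ give the exponent, with $\rho\downarrow\beta/\xi$ as the auxiliary parameters are tuned. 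Both arguments are sound and both indeed cover all $d\ge1$ and $\lambda\ge0$. The paper's proof is much shorter and reuses machinery already in place (stationarity and ergodicity of $(\tilde{\omega}_i,\mathcal{E}_i)$); yours is more elementary and self-contained, relying only on the exponential tail, the independence of $\mathcal{E}$, $X$ and $\omega$, and the trivial range bound, and it produces explicit summable probability estimates rather than a soft ergodic limit, at the cost of the extra bookkeeping with $\xi_1$, $\epsilon$, $\eta$ and the dyadic interpolation.
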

\begin{proof}[Proof of Lemma \ref{lemma:liminf}]
For any $\alpha$ in $(0,1)$, by subadditivity we have $(u+v)^\alpha \leq
u^\alpha + v^\alpha$ for all positive $u,v$, and then
\begin{equation*}
 S_n^{\alpha}\leq \sum_{i=1}^{n}\mathcal{E}_i^{\alpha}e^{\alpha\beta C_{X_i}}.
\end{equation*}
Now, define the function $f_{\alpha}$
\begin{eqnarray*}
f_{\alpha}:&(\mathbb{R}^{\mathbb{N}},\Omega^{\mathbb{N}})&\rightarrow \mathbb{R}\\ 
  &((\mathcal{E}_i)_{i\in\mathbb{N}},(\tilde{\omega}_{i})_{i\in\mathbb{N}})&\rightarrow \mathcal{E}_0^{\alpha}e^{\alpha\beta C_0(\tilde{\omega}_0)}.
\end{eqnarray*}
Applying Lemma \ref{lemma:ergodique} and the ergodic theorem to $(\tilde{\omega},\mathcal{E})$ with the non negative function $f_{\alpha}$, we obtain that for any $\alpha$ such that $\alpha \beta<\xi$, 
$$
 \limsup_{n\to +\infty} \frac{ S_n^\alpha}{n}
\leq 
 \lim_{n\to +\infty} \frac{\sum_{i=1}^{n}\mathcal{E}_i^{\alpha}
e^{\alpha\beta C_{X_i}} }{n} = E_Q(\mathcal{E}_1^{\alpha}) \times
\E( e^{\alpha\beta C_0})
<\8
$$
almost surely. Therefore,
\begin{equation*}
 \limsup_{n\to +\infty} \frac{\ln S_n}{\ln n}<\frac{1}{\alpha}.
\end{equation*}
Since $\alpha$ is arbitrary in $(0,\xi/\beta)$, the proof is complete.
\end{proof}

The two following lemmas deal with the one dimensional case. Notice that when $d=1$, for all $n>0$,
\begin{equation*}
 \mathbb{P}(C\geq n)=p\sum_{k=0}^{n-1}p^kp^{n-1-k}=np^n,
\end{equation*}
and as a consequence $\xi=-\ln p$.
\begin{lemma}
\label{lemma:bornesup}
Assume $\beta>\xi$. For $d=1$ and $\lambda=0$, we have $P$-almost surely,
$\limsup_n \frac{\ln S_n}{\ln n}\leq \frac{\beta}{2\xi}+\frac{1}{2}$.
\end{lemma}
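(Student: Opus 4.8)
\medskip
\noindent\textbf{Proof plan.}
Lemma~\ref{lemma:liminf} already gives $\limsup_n\frac{\ln S_n}{\ln n}\le\frac{\beta}{\xi}$; the point here is to improve this to $\frac{\beta}{2\xi}+\frac12$ by exploiting the recurrence of the skeleton. When $d=1$ and $\lambda=0$, $X$ is the recurrent nearest--neighbour walk on $\Z$, so the ergodic--theorem estimate of Lemma~\ref{lemma:liminf} is wasteful: up to time $n$ the walk stays inside the interval $[\min_{k<n}X_k,\max_{k<n}X_k]$, of length $n^{1/2+o(1)}$, and accumulates at most $n^{1/2+o(1)}$ units of local time at each of its sites. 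Writing $L_n(x)=\sharp\{0\le i<n:\ X_i=x\}$ and grouping the sum~\eqref{eq:S_n} by the value of $X_i$, the plan starts from
\begin{equation*}
S_n\;=\;\sum_{x}e^{\beta C_x}\sum_{i<n:\,X_i=x}\mathcal E_i
\;\le\;\Big(\max_{i<n}\mathcal E_i\Big)\Big(\max_x L_n(x)\Big)\sum_{x:\,L_n(x)\ge1}e^{\beta C_x}\;,
\end{equation*}
and I would control the three factors separately, $P$--almost surely for all large $n$. Since $\IP(\max_{i<n}\mathcal E_i>3\ln n)\le n\cdot n^{-3}$, Borel--Cantelli yields $\max_{i<n}\mathcal E_i\le3\ln n$ eventually. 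For the middle factor I would invoke the classical bounds for the simple random walk on $\Z$ (see, e.g., \cite{Revesz}): $\tilde P$--a.s., eventually $\max_{k<n}|X_k|\le n^{1/2+\delta}$ and $\max_x L_n(x)\le n^{1/2+\delta}$, for every fixed $\delta>0$; a union bound over the at most $2n+1$ sites with $|x|\le n$, using the standard exponential lower tail of the occupation time of a fixed site, gives the local--time statement directly.

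\noindent\textbf{The cluster sum (the crux).}
By the above, the last sum runs over $x$ with $|x|\le M_n:=\lceil n^{1/2+\delta}\rceil$, so it suffices to show: for every $\eta>0$, $\IP$--a.s.\ and eventually in $M$, $\sum_{|x|\le M}e^{\beta C_x}\le M^{\beta/\xi+\eta}$. Here $\beta>\xi$ forces $\E e^{\beta C_0}=\8$, so a plain first--moment bound is useless; the idea is to truncate $e^{\beta C_x}$ at the level $k_M:=(\xi^{-1}+\eta')\ln M$, with $\eta'>0$ small to be chosen. First, $\IP(\exists\,|x|\le M:\ C_x>k_M)\le(2M+1)\IP(C_0\ge k_M)\le M^{-\eta'\xi/2}$ for $M$ large, using $\IP(C_0\ge k)=e^{-\xi k(1+o(1))}$; this is summable along $M=2^j$, so eventually the sum equals its truncation. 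Second, $\E[e^{\beta C_0}\mathbf 1_{\{C_0\le k\}}]\le\sum_{j\le k}e^{\beta j}\IP(C_0\ge j)\le C_{\eta'}\,e^{(\beta-\xi+\eta')k}$, hence
\begin{equation*}
\E\Big[\sum_{|x|\le M}e^{\beta C_x}\mathbf 1_{\{C_x\le k_M\}}\Big]\;\le\;(2M+1)\,C_{\eta'}\,M^{(\beta-\xi+\eta')(\xi^{-1}+\eta')}\;=\;C'_{\eta'}\,M^{\beta/\xi+O(\eta')}\;.
\end{equation*}
Choosing $\eta'$ small enough relative to $\eta$ and applying Markov's inequality along $M=2^j$ makes $\IP(\sum_{|x|\le M}e^{\beta C_x}\mathbf 1_{\{C_x\le k_M\}}>M^{\beta/\xi+\eta})$ summable; Borel--Cantelli, together with the monotonicity of $M\mapsto\sum_{|x|\le M}e^{\beta C_x}$ (to pass from $M=2^j$ to all $M$), completes the step.

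\noindent\textbf{Conclusion and main obstacle.}
Combining the three bounds, $P$--a.s.\ and eventually $S_n\le 3\ln n\cdot n^{1/2+\delta}\cdot M_n^{\beta/\xi+\delta}\le n^{\,1/2+\beta/(2\xi)+O(\delta)}$, so $\limsup_n\frac{\ln S_n}{\ln n}\le\frac12+\frac{\beta}{2\xi}+O(\delta)$, and letting $\delta\downarrow0$ gives the lemma. The delicate part is the cluster sum: because $e^{\beta C_0}$ is non-integrable one must truncate at the precise scale $(\xi^{-1}+\eta')\ln M$ --- small enough that the truncated first moment is still $M^{\beta/\xi+O(\eta')}$, yet large enough that no cluster met in the explored interval exceeds it with summable probability --- and then trade off $\eta'$ against the error exponents. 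Everything else reduces to the classical facts that the one--dimensional simple random walk occupies an interval of length $n^{1/2+o(1)}$ with maximal local time $n^{1/2+o(1)}$; the extra $\frac12$ in the exponent is exactly this $\sqrt n$ of one--dimensional recurrence.
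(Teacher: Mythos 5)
Your proof is correct, but the key step is handled quite differently from the paper. You bound $S_n$ directly by $(\max_{i<n}\mathcal E_i)\cdot(\max_x L_n(x))\cdot\sum_{x\in\mathrm{range}}e^{\beta C_x}$ and control the cluster sum $\sum_{|x|\le M}e^{\beta C_x}\le M^{\beta/\xi+o(1)}$ by truncating cluster sizes at $(\xi^{-1}+\eta')\ln M$, bounding the truncated first moment, and applying Markov plus Borel--Cantelli along dyadic scales. The paper instead takes a fractional moment: by subadditivity $S_n^{\alpha}\le\sum_y e^{\alpha\beta C_y}\big(\sum_{k<\theta(n,y)}\mathcal E_{y,k}\big)^{\alpha}$ with $\alpha\beta<\xi$, so that $e^{\alpha\beta C}$ is integrable; it then uses a relabelling of the exponentials per site, a uniform large--deviation bound on the per-site averages $u_{y,n}$, and the spatial ergodic theorem $\frac{1}{2n^{1/2+\eta}}\sum_{|y|\le n^{1/2+\eta}}e^{\alpha\beta C_y}\to\E[e^{\alpha\beta C}]$, before raising back to the power $1/\alpha$ with $\alpha\uparrow\xi/\beta$. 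Both arguments rest on the same two facts from R\'ev\'esz (range $n^{1/2+o(1)}$ and maximal local time $n^{1/2+o(1)}$ for the one-dimensional walk), and both recover the exponent $\frac12+\frac{\beta}{2\xi}$. What your route buys is elementarity and self-containedness: no ergodic theorem, no per-site relabelling or LDP for the clocks (a crude $\max_{i<n}\mathcal E_i\le 3\ln n$ suffices at logarithmic scale), with the non-integrability of $e^{\beta C_0}$ handled by truncation at exactly the largest cluster scale seen in the window; what the paper's route buys is consistency with the machinery already set up for Lemma~\ref{lemma:liminf} (stationarity of the environment seen from the walker) and a cleaner identification of the spatial average as $\E[e^{\alpha\beta C}]$. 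Two points in your sketch deserve one line each in a full write-up: the passage from dyadic $M=2^j$ to general $M$ (monotonicity of the sum costs only a factor $2^{\beta/\xi+\eta}$, which is absorbed), and the maximal local time bound, which either follows from your union bound with the return-probability estimate $\IP(L_n(x)\ge m)\le(1-c/\sqrt n)^{m-1}$ or can simply be cited as in the paper.
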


\begin{proof}[Proof of Lemma \ref{lemma:bornesup}]
Here we need to relabel our sequence of exponential variables $(\mathcal{E}_i;
i \geq 0)$. For $y\in\mathbb{Z},k\in\mathbb{N}$, define $\mathcal{E}_{y,k})$
by
$$
\mathcal{E}_{y,k} = (\mathcal{E}_n \quad {\rm with}\; i\; {\rm such \ that}
\quad X_i=y, \sharp \{j: 0 \leq j \leq i, X_j=y\}=k\;,
$$
i.e. the exponential corresponding to the $k$-th passage at $y$. These new 
variables are a.s. well defined when $d=1$ and $\lambda=0$, and 
it is not difficult to see that the sequence $(\mathcal{E}_{y,k})_{y\in\mathbb{Z},k\in\mathbb{N}}$ is i.i.d. with mean $1$ exponential distribution, 
and independent of $X$ and of $\omega$.
The number of visits of the walk to a site $y$ at time $n$ will be denoted by $\theta(n,y)$. We can rewrite $S_n$ in the following way,
\begin{equation} \label{eq:newS_n}
S_n=\sum_{i=0}^{n-1}e^{\beta C_{X_i}}\mathcal{E}_i=\sum_{y\in \mathbb{Z}}e^{\beta C_y}\left(\sum_{k=0}^{\theta(n,y)-1}\mathcal{E}_{y,k}\right).
\end{equation}
Notice that for any $\eta>0$, $\tilde{P}$-almost surely for $n$ large enough, $\theta(n,y)=0$ for $y>n^{\frac{1}{2}+\eta}$ (see for example  Theorem 5.7 p44 in \cite{Revesz}).
As a consequence, we obtain that for any positive $\alpha<1$, $\tilde{P}$-almost surely for $n$ large enough,
\begin{equation*}
S_n^{\alpha}\leq \sum_{y=-n^{-\frac{1}{2}+\eta}}^{n^{\frac{1}{2}+\eta}}e^{\alpha \beta C_y}\left(\sum_{k=0}^{\theta(n,y)-1}\mathcal{E}_{y,k}\right)^{\alpha}.
\end{equation*}
Here and below, the sum $\sum_{y=a}^b$ with  real numbers $a<b$, ranges over 
all $y \in   \mathbb{Z}$ with $a \leq y \leq b$.
Notice now that for any $\nu>0$, $\tilde{P}$-almost surely for $n$ large enough, $\sup\{\theta(n,y),y\in\mathbb{Z}\}<n^{\frac{1}{2}+\nu}$(see for example  Theorem 11.3 p118 in \cite{Revesz}) and we obtain for such $n$,
\begin{equation}
\label{equation:majoration}
\frac{1}{2n^{\frac{1}{2}+\eta}n^{(\frac{1}{2}+\nu)\alpha}}S_n^{\alpha}\leq
\frac{1}{2n^{\frac{1}{2}+\eta}}\sum_{y=-n^{-\frac{1}{2}+\eta}}^{n^{\frac{1}{2}+\eta}}e^{\alpha \beta C_y}(\frac{1}{n^{\frac{1}{2}+\nu}}\sum_{k=0}^{n^{\frac{1}{2}+\nu}}\mathcal{E}_{y,k})^{\alpha}.
\end{equation}
For any $y$ in $\mathbb{Z}$ and $n$ in $\mathbb{N}$, we define $\displaystyle u_{y,n}=\frac{1}{n^{\frac{1}{2}+\nu}}\sum_{k=0}^{n^{\frac{1}{2}+\nu}}\mathcal{E}_{y,k}$.
Fix $\mu>0$, according to the large deviation principle for i.i.d. sequences, there exists $I_{\mu}>0$ such that, for any $y$ in $\mathbb{Z}$ and any $n$ in $\mathbb{N}$,
\begin{equation*}
{Q}(|u_{y,n}-1|>\mu)\leq e^{-I_{\mu} n^{\frac{1}{2}+\nu}}.
\end{equation*}
Using the independance of the $(\mathcal{E}_{y,k})_{y\in\mathbb{Z},k\in\mathbb{N}}$, it is easy to check that ${Q}(\exists y \in [-n^{\frac{1}{2}+\eta},n^{\frac{1}{2}+\eta}], |u_{y,n}-1|>\mu)$ is the general term of a convergent series and using Borel-Cantelli's lemma we obtain that ${Q}$-almost surely, for $n$ large enough and for any $-n^{\frac{1}{2}+\eta}<y<n^{\frac{1}{2}+\eta}$,
\begin{equation}
\label{equation:uniforme}
|u_{y,n}-1|<\mu.
\end{equation}
From the ergodicity of the environment, it is true that $\mathbb{P}$-almost surely,
\begin{equation}
\label{equation:ergodicity}
\frac{1}{2n^{\frac{1}{2}+\eta}}\sum_{y=-n^{-\frac{1}{2}+\eta}}^{n^{\frac{1}{2}+\eta}}e^{\alpha \beta C_y}\xrightarrow{n\to +\infty}{}\mathbb{E}[e^{\alpha \beta C}].
\end{equation}
Using now (\ref{equation:majoration}),(\ref{equation:uniforme}) and (\ref{equation:ergodicity}), we obtain that for any $\alpha<\xi/\beta$, there exists $M<+\infty$ such that, $P$-almost surely for $n$ large enough,
\begin{equation*}
S_n<Mn^{\frac{1}{2\alpha}+\frac{\eta}{\alpha}+\frac{1}{2}+\nu}.
\end{equation*}
Since the last inequality is true for $\eta$ and $\mu$ arbitrary small and $\alpha$ arbitrary close to $\xi/\beta$, the proof is complete.
\end{proof}


\begin{lemma}
\label{lemma:borneinf}
Assume $\beta>\xi$. For $d=1$ and $\lambda=0$, we have $P$-almost surely,
$\liminf_{n \to \8} \frac{\ln S_n}{\ln n}\geq \frac{\beta}{2\xi}+\frac{1}{2}$.
\end{lemma}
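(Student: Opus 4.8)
The plan is to bound $S_n$ from below by the contribution of a \emph{single} well chosen site which simultaneously carries a large percolation cluster and is visited of order $\sqrt n$ times by $X$, and then to read off the $\liminf$ along the dyadic subsequence $n=2^{k}$, transferring it to all $n$ afterwards by the monotonicity of $(S_n)$. I work on the enlarged probability space used in the proof of Lemma~\ref{lemma:bornesup}, on which $X$, the relabelled exponentials $(\mathcal{E}_{y,k})_{y\in\mathbb Z,\,k\in\mathbb N}$ and $\omega$ are independent, so that $S_n=\sum_{y\in\mathbb Z}e^{\beta C_y}\sum_{k=0}^{\theta(n,y)-1}\mathcal{E}_{y,k}$, cf.\ (\ref{eq:newS_n}). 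Fix a small $\epsilon>0$ and set $b_n:=\lfloor n^{1/2-\epsilon\xi/2}\rfloor$.

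\textbf{Environment side.} Arguing as in Lemma~\ref{lemma:range}, from the exponential tail $\mathbb P(C_0\geq m)=e^{-\xi m(1+o(1))}$ and an independence argument over disjoint windows of width $O(\ln n)$, the expected number of sites $y\in[-b_n,b_n]$ with $C_y\geq(\tfrac1{2\xi}-\epsilon)\ln n$ is $b_n\cdot n^{-1/2+\epsilon\xi+o(1)}=n^{\epsilon\xi/2+o(1)}\to\infty$, while the probability of having no such site decays faster than any power of $n$. By Borel--Cantelli, $\mathbb P$-a.s. for all large $n$ there is a site $y_n\in[-b_n,b_n]$ with $C_{y_n}\geq(\tfrac1{2\xi}-\epsilon)\ln n$. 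Note $|y_n|=o(\sqrt n)$.

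\textbf{Walk side (the crux).} I claim that $\tilde P$-a.s., along $n=2^{k}$, one eventually has $\theta(n,y_n)\geq \tfrac12\sqrt n/(\ln n)^{3}$. First, $\theta(n,0)\geq\sqrt n/(\ln n)^{3}$ for all large dyadic $n$: writing $\varrho_m$ for the $m$-th return time of $X$ to $0$, the event $\{\theta(n,0)<m\}$ is $\{\varrho_m>n\}$, and since $\varrho_m$ is a sum of $m$ i.i.d.\ copies of the first return time, whose common tail is of order $t^{-1/2}$, one has $\tilde P(\varrho_m>n)=O(m/\sqrt n)$ for $m\leq\sqrt n$; with $m=\lceil\sqrt n/(\ln n)^{3}\rceil$ this is $O((\ln n)^{-3})$, summable along $n=2^{k}$ (alternatively invoke Chung's law of the iterated logarithm for the local time at the origin). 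Second, decompose $X$ into its i.i.d.\ excursions away from $0$; since $\lambda=0$ the counting measure is invariant, so the expected number of visits of one excursion to a fixed site equals $1$, and the per-excursion visit counts $V_j(y)$ of a fixed $y$ are i.i.d.\ with mean $1$ and variance $O(|y|)$. On $\{\theta(n,0)\geq m\}$ we have $\theta(n,y)\geq\sum_{j=1}^{m-1}V_j(y)$, and Chebyshev gives, uniformly in $|y|\leq b_n$,
\[
\tilde P\Big(\textstyle\sum_{j=1}^{m-1}V_j(y)<\tfrac m2\Big)=O\big(|y|/m\big)=O\big(n^{-\epsilon\xi/2}(\ln n)^{3}\big),
\]
which is again summable along $n=2^{k}$. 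Since $y_n$ is $\omega$-measurable and $X$ is independent of $\omega$, conditioning on $\omega$ reduces the bound for $\theta(n,y_n)$ to this uniform estimate, and Borel--Cantelli proves the claim. (One could also simply invoke the classical fine estimates on the one-dimensional local time from \cite{Revesz}.)

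\textbf{Conclusion and main obstacle.} Since $\theta(n,y_n)\to\infty$ at polynomial speed, Cramér's bound for the Gamma law together with a crude union bound over $|y_n|\leq\sqrt n$ shows that, off an event of summable probability, $\sum_{k=0}^{\theta(n,y_n)-1}\mathcal{E}_{y_n,k}\geq\tfrac12\theta(n,y_n)$. Hence, on the intersection of the good events, for all large dyadic $n$,
\[
S_n\geq e^{\beta C_{y_n}}\sum_{k=0}^{\theta(n,y_n)-1}\mathcal{E}_{y_n,k}\geq \tfrac14\, n^{\beta(\frac1{2\xi}-\epsilon)}\,\frac{\sqrt n}{(\ln n)^{3}}=\frac{n^{\frac{\beta}{2\xi}+\frac12-\epsilon\beta}}{4(\ln n)^{3}},
\]
so $\ln S_{2^{k}}/(k\ln 2)\geq\frac{\beta}{2\xi}+\frac12-\epsilon\beta+o(1)$. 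As $(S_n)$ is nondecreasing and $\ln 2^{k+1}/\ln 2^{k}\to1$, the same bound holds for $\liminf_{n}\ln S_n/\ln n$, and letting $\epsilon\downarrow0$ finishes the proof. The delicate point is the walk side: one needs a large cluster \emph{and} a large local time at the \emph{same} site, simultaneously for \emph{all} large $n$; this forces a lower bound on $\theta(n,\cdot)$ that is uniform over a whole window around the origin, the relevant tail and variance estimates are only summable along a sparse subsequence of times (whence the run along $n=2^{k}$ and the transfer by monotonicity), and one must keep track of the independence between $\omega$ (which locates $y_n$), $X$ (which supplies the local time) and the $\mathcal{E}_{y,k}$ (which supply the holding times).
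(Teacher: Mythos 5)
Your proof is correct, but it organizes the argument in the opposite order to the paper's. The paper goes ``walk first, environment second'': from the two local-time facts quoted from \cite{Revesz} (the walk stays within $n^{1/2+\eta/2}$ of the origin, and $\sup_y\theta(n,y)<n^{1/2+\nu\xi/4}$) it extracts a large set $O_n$ of at least $n^{1/2-\nu\xi/4}$ sites each visited at least $n^{1/2-\eta}$ times, then chooses $\sim n^{1/2-\nu\xi/4}/\ln n$ of them with disjoint surrounding intervals of length $\approx\frac{1}{2}(\frac1\xi-\nu)\ln n$ and uses the i.i.d.\ environment plus Borel--Cantelli to find one \emph{frequently visited} site lying on a cluster of size $\approx\frac{1}{2\xi}\ln n$; no lower bound on the local time of any prescribed site is ever required. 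You go ``environment first, walk second'': you locate an $\omega$-measurable site $y_n$ with $C_{y_n}\geq(\frac{1}{2\xi}-\epsilon)\ln n$ in the window $[-n^{1/2-\epsilon\xi/2},n^{1/2-\epsilon\xi/2}]$, and then must prove the quantitative statement that the walk visits this \emph{specific} site of order $\sqrt n/(\ln n)^3$ times, which you do via the excursion decomposition, the mean-one, variance-$O(|y|)$ excursion visit counts and Chebyshev, together with the local-time lower bound at the origin (return-time tails or Chung's LIL), at the price of summability only along the dyadic subsequence and a transfer to all $n$ by monotonicity of $S_n$; shrinking the window to $n^{1/2-\epsilon\xi/2}$ is what makes Chebyshev summable and costs only an $\epsilon\beta$ in the exponent, which disappears as $\epsilon\downarrow0$. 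Both routes finish with the same concentration bound for the attached exponential holding times and the same numerology (about $\sqrt n$ candidate sites forces the cluster exponent $\frac{1}{2\xi}$, the local time contributes the extra $\frac12$). The paper's ordering buys economy: only off-the-shelf upper bounds on the range and the supremum of the local time are needed. Your ordering buys a more transparent picture of the trapping mechanism (the walk is slowed by the largest cluster in the $\sqrt n$-window around the origin), at the cost of the per-site local-time lower bound and the dyadic/monotonicity device.
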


\begin{proof}[Proof of Lemma \ref{lemma:borneinf}]
Let $\eta$ and $\nu$ be two positive real numbers.
We recall two facts used in Lemma \ref{lemma:bornesup}, $\tilde{P}$-almost surely and for $n$ large enough,
\begin{itemize}
 \item  $\theta(n,y)=0$ for any $y\geq n^{\frac{1}{2}+\frac{\eta}{2}}$,
 \item  $\sup\{\theta(n,y),y\in\mathbb{Z}\}<n^{\frac{1}{2}+\frac{\nu\xi}{4}}$.
\end{itemize}
As a consequence of those two facts, $\tilde{P}$-almost surely, for $n$ large enough, at least $n^{\frac{1}{2}-\frac{\nu\xi}{4}}$ sites are visited more than $n^{\frac{1}{2}-\eta}$ times, we will denote the set of those sites by $O_n$. Fix now a path $(X_i)_{i\geq 0}$ such that for all $n\geq 0$, $\sharp O_n\geq n^{\frac{1}{2}-\frac{\nu\xi}{4}}$.
As in the proof of Lemma \ref{lemma:limsup}, we can choose a familly of $\alpha_n:=\frac{n^{\frac{1}{2}-\frac{\nu\xi}{4}}}{\frac{1}{2}(\frac{1}{\xi}-\nu)\ln n}$ points $(y_i)_{i\leq \alpha_n}$ in $O_n$ such that the intervals $(I_i)_{i\leq \alpha_n}$ centered in $(y_i)_{i\leq \alpha_n}$ and of length $\frac{1}{2}(\frac{1}{\xi}-\nu)$ are disjoint. If all sites of an intervall are open, it will be said open, otherwise it will be said closed. Using the fact that the $(I_i)_{i\leq \alpha_n}$ are disjoint, we obtain that,
\begin{align*}
\mathbb{P}(I_i\ \textrm{is closed, for all}\ i\leq \alpha_n)&\leq(1-n^{-\frac{1}{2}(1-\nu\xi)+o(1)})^{\alpha_n}\\
&\leq e^{-n^{\frac{\nu\xi}{4}+o(1)}}.
\end{align*}
As a consequence of Borell-Cantelli's lemma we obtain that $P$-almost surely, for $n$ large enough, there exists at least one site visited more than $n^{\frac{1}{2}-\eta}$ times and that belongs to a cluster of size greater than $\frac{1}{2}(\frac{1}{\xi}-\nu)\ln n$, we will note this site $\tilde{y}_n$, and therefore,
\begin{equation*}
S_n\geq \sum_{i=0}^{n^{\frac{1}{2}-\eta}}n^{\frac{\beta}{2\xi}-\nu\beta}\mathcal{E}_{\tilde{y}_n,i}.
\end{equation*}
Using the large deviation upper bound similarly to the lines 
below (\ref{equation:majoration}), 
we obtain from the last inequality that $P$-almost surely, for $n$ large enough,
\begin{equation*}
S_n\geq \frac{1}{2}n^{\frac{1}{2}+\frac{\beta}{2\xi}-\nu\beta-\eta}.
\end{equation*}
Since $\nu$ and $\eta$ can be choosen arbitrary small, this last inequality 
ends the proof.
\end{proof}


\begin{proof}[Proof of Theorem \ref{theorem:vitesse}]
We first assume that $\beta > \xi$. 
From Lemma \ref{lemma:limsup} and Lemma \ref{lemma:liminf}, we know that under assumptions of parts \ref{item:partie1} or \ref{item:partie2} of Theorem \ref{theorem:vitesse}, 
\begin{equation*}
 \lim_{n\to +\infty}\frac{\ln S_n}{\ln n}=\frac{\beta}{\xi}\qquad P-{\rm a.s.}
\end{equation*}
From the inequalities
\begin{equation*}
 \frac{\ln S(S^{-1}(t))}{\ln S^{-1}(t)}\leq \frac{\ln t}{\ln S^{-1}(t)}
< \frac{\ln S(S^{-1}(t)+1)}{\ln S^{-1}(t)},
\end{equation*}
we deduced that $P$-almost surely,
\begin{equation*}
 \lim_{t \to +\infty}\frac{\ln t}{\ln S^{-1}(t)}=\frac{\beta}{\xi}.
\end{equation*}
Applying the same arguments as above, we deduce from Lemma \ref{lemma:bornesup} and Lemma \ref{lemma:borneinf} that under assumptions of part \ref{item:partie3} of Theorem \ref{theorem:vitesse},
\begin{equation*}
 \lim_{t \to +\infty}\frac{\ln t}{\ln S^{-1}(t)}=\frac{\beta}{2\xi}+\frac{1}{2},\qquad P-{\rm a.s.}
\end{equation*}
Write now,
\begin{equation*}
 \frac{\ln |Y_t|}{\ln t}=\frac{\ln |X_{S^{-1}(t)}|}{\ln S^{-1}(t)}\frac{\ln S^{-1}(t)}{\ln t}.
\end{equation*}
To conclude in the case $\beta > \xi$, note that under assumptions of part \ref{item:partie1}, $\frac{\ln |X_{n}|}{\ln n}$ converges $\tilde{P}$-almost surely to $1$ and under assumption of part \ref{item:partie2} and \ref{item:partie3}, $\tilde{P}$-almost surely, $\limsup_{n\to +\infty} \frac{\ln |X_{n}|}{\ln n}= \frac{1}{2}$ by the law of iterated logarithm.

To extend the results to the border case $\beta = \xi$, we use the property
(\ref{eq:coupling}) of the coupling, which implies that the long-time limit of
$\frac{\ln |Y_t|}{\ln t}$ is non-increasing in $\beta$. This completes the proof of part 1 with $\beta = \xi$. Now, we will prove independently  
$\limsup_{n\to +\infty}  \frac{\ln |Y_t|}{\ln t}=1/2$ (\ref{eq:derniercas})
below.  Again, by the monotonicity of the coupling, this ends the proof of parts 2 and 3 with $\beta = \xi$.
\end{proof}


\begin{proof}[Proof of Theorem \ref{theorem:diff}]
First observe that when $\lambda=0$, 
$$
f(Y_t)-\int_0^t  (2d)^{-1} e^{-\beta C_{Y_s}} \sum_{e \sim 0}
\Big[ f(Y_s+e) -f(Y_s) \Big]ds
$$
is a $P_\omega$-martingale for $f$ continuous and bounded. Then, 
for all $\omega$, the process $Y$ is a square integrable martingale
under the quenched law $P_\omega$. Its bracket is the unique 
process $\langle Y \rangle$ taking its values in the space of nonnegative 
symmetric $d \times d$ matrices such that  $Y_t Y_t^* -\langle Y \rangle_t$ 
is a martingale and $\langle Y \rangle_0=0$. We easily compute 
$$
\langle Y \rangle_t= 
\int_0^t e^{-\beta C_{Y_s}}ds  \times
d^{-1} I_d
$$
By  Corollary \ref{corollary:empirique}, we see that the bracket 
$Z^{\epsilon}$ is such that, for all $t \geq 0$,
\begin{eqnarray} \nonumber
\langle Z^{\epsilon} \rangle_t &=&
{\epsilon} \langle Y \rangle_{{\epsilon}^{-1}t} \\ \nonumber
 &=& 
\epsilon \int_0^{\epsilon^{-1} t} e^{-\beta C_{Y_s}}ds \times
d^{-1} I_d\\ \nonumber
&\longrightarrow &
t \Sigma  \qquad {\rm as\ }\epsilon \searrow 0
\end{eqnarray}
$P$-a.s., and then in $P_\omega$-probability for a.e. $\omega$. Let us fix
such an $\omega$, and use the law $P_\omega$.
Since the martingale $ Z^{\epsilon}$ has jumps of size $\epsilon^{-1/2}$
tending to 0
and since its bracket converges to a deterministic limit, it is well known 
(e.g. Theorem VIII-3.11 in \cite{Jacod-Shiryaev87}) that the sequence
  $(Z^{\epsilon}, \epsilon >0)$ converges to the centered Gaussian process
with variance $t \Sigma$, yielding the desired 
invariance principle under  $P_\omega$.

We now prove (\ref{eq:derniercas}). Since $\lambda=0$ we have $\limsup_n
\ln |X_n|/\ln n = 1/2, \tilde P$-a.s., and since $\E e^{\beta C_0}<\8$
it holds a.s. $\lim_t \ln S^{-1}(t)/\ln t = 1$. This implies the claim.
\end{proof}

{\bf Concluding remarks:} 
(i) Part 2 of  Theorem \ref{theorem:vitesse} deals with the upper limit in the 
subdiffusive case $\lambda=0, \beta > \xi$. We comment here on the lower 
limit. In dimension $d \geq 3$, $n^{-1/2}|X_{[ns]}|$ converges to a  
 transient Bessel process, and it is not difficult to see that
$$ \limsup_{t \to \8}   \frac{\ln |Y_t|}{\ln t}= \lim_{t \to \8}   
\frac{\ln |Y_t|}{\ln t}
= \xi/(2\beta)$$
In dimension $d \leq 2$, $X$ is recurrent, and then $\liminf_{t}|Y_t|=0$ 
and
$$ \liminf_{t \to \8}  \frac{\ln |Y_t|}{\ln t}= -\8$$

(ii) A natural question is: What does the environment seen from the walker
look like in the subballistic case? In fact, the prominent feature is
that the size of surrounding cluster is essentially the largest
one which was visited so far. Consider for instance the case of positive
$\lambda$. One can prove that, for $\beta > \xi$ and $\epsilon >0$,
$$
\frac{1}{t}
\Big\vert \left\{ s \in [0,t]: (\ln t)^{-1}C_{Y_s} \in [\beta^{-1} -\epsilon,
\beta^{-1} + \epsilon]\right\} \Big\vert \longrightarrow 1
$$
$P$-a.s. as $t \nearrow \8$.
\medskip

\noindent {\bf Acknowlegement:} We thank Marina Vachkosvskaia for stimulating
discussions on the model.

\bigskip

\small

\end{document}